\documentclass[screen]{aomart}
\usepackage[a4paper, left=1.2in, right=1.2in, top=1in, bottom=1in]{geometry}
\usepackage[english]{babel}

\makeatletter
\fancypagestyle{firstpage}{%
  \fancyhf{}%
  \if@aom@manuscript@mode
    \lhead{\begin{picture}(0,0)%
        \put(-21,-25){\usebox{\@aom@linecount}}%
      \end{picture}}%
  \fi
  \fancyfoot[R]{\footnotesize\thepage} % Page number on right
  \fancyfootoffset[RO]{1.2in}          % Align with right margin
}

\makeatletter
\fancypagestyle{firstpage}{%
  \fancyhf{}%
  \if@aom@manuscript@mode
    \lhead{\begin{picture}(0,0)%
        \put(-21,-25){\usebox{\@aom@linecount}}%
      \end{picture}}%
  \fi
}

\fancypagestyle{mainstyle}{%
  \fancyhf{}%
  \fancyhead[CE]{\footnotesize{\hspace{-1in}B. Forghani and J. Frisch}} % Even pages: Authors (left)
  \fancyhead[CO]{\footnotesize{\hspace{1in} Non--stability of Liouville measures under convex combinations}}  % Odd pages: Title (right)
  \fancyhead[LE]{\hspace{-.7in}\footnotesize\thepage} % Even pages: Page number (left)
  \fancyhead[RO]{\footnotesize\thepage\hspace{-.7in}} % Odd pages: Page number (right)
}
\makeatother
\makeatother

\title{Non--stability of Liouville measures under convex combinations}
\author[B. Forghani]{Behrang Forghani}
\address{
College of Charleston}
\email{forghanib@cofc.edu} 
\author[J. Frisch]{Joshua Frisch} 
\address{University of California San Diego}
\email{joshfrisch@gmail.com}

\keyword{Random walks, ICC, Liouville, Poisson boundary, amenable, secret sharing, FC-central}
\copyrightnote{}

\usepackage{xargs}                      % Use more than one optional parameter in a new commands
\usepackage[colorinlistoftodos,prependcaption,textsize=tiny]{todonotes}
\newcommandx{\change}[2][1=]{\todo[inline,linecolor=none,backgroundcolor=gray!20!,bordercolor=blue,textcolor=blue]{#2}}

\numberwithin{equation}{section}

\makeatletter
\renewcommand\subsection{%
  \@startsection{subsection}{3}{\z@}{2.25ex \@plus1ex \@minus.2ex}%
  {-.5em}{\normalfont\normalsize\bfseries}}
\makeatother

\newtheorem{theorem}{\bf{Theorem}}[section]
\newtheorem{thm}{\bf{Theorem}}

\newtheorem{corollary}[theorem]{\bf {Corollary}}
\newtheorem{lemma}[theorem]{\bf {Lemma}}

\theoremstyle{remark}
\newtheorem{remark}[theorem]{\bf {Remark}}
\theoremstyle{definition}
\newtheorem{definition}[theorem]{\bf {Definition}}

\newcommand{\ep}{\varepsilon}
\newcommand{\ta}{\tau}
\newcommand{\al}{\alpha}

\newcommand{\tm}{\tilde\mu}
\newcommand{\tti}{\tilde\theta}
\newcommand{\tn}{\tilde\nu}
\newcommand{\ZZ}{{\mathbb{Z}}}
\newcommand{\RR}{{\mathbb{R}}}
\newcommand{\NN}{{\mathbb{N}}}
\newcommand{\PP}{{\mathbb{P}}}

\newcommand{\U}{\mathbb{U}}
\newcommand{\D}{\mathcal{D}}

\renewcommand{\P}{{\mathcal P}}

\let\oldsummation\sum
\renewcommand{\sum}{\displaystyle\oldsummation}

\begin{document}
\begin{abstract}
\noindent For every non-hyper-FC-central countable amenable group and every $k\geq 2$, we provide a sequence of symmetric, fully supported probability measures such that their convex combination is non-Liouville (that is it admits a non-constant bounded harmonic function, equivalently, the Poisson boundary is non-trivial) if and only if at least $k$ of them appear in the convex combination. Particularly, our result implies that the set of Liouville measures is not closed under convex combination, which answers a question of Kaimanovich. We also provide a similar result under the additional assumption of finite entropy for those  non-hyper-FC-central countable  groups with the property that every symmetric, finitely supported probability measure is Liouville. These groups are the only known non-trivial examples of countable groups that admit Liouville measures with finite entropy.  Examples include the lamplighter group over $\ZZ$ and $\ZZ^2$, and the infinite symmetric group of finite permutations on $\ZZ$.
\end{abstract}
\maketitle
\setcounter{tocdepth}{1}
\tableofcontents
\section{Introduction}
Let $G$ be a countable group equipped with a probability measure $\mu$. Unless otherwise stated, we assume a probability measure is non-degenerate (that is, the support of $\mu$ generates $G$ as a semigroup). A bounded function $f\colon G \to \RR$ is called \emph{$\mu$--harmonic} if for every $g\in G$,
\[
f(g) = \sum_{h\in G} f(gh)\,\mu(h).
\]
It is easy to see that constant functions are bounded harmonic functions. A probability measure $\mu$ on $G$ is called \emph{Liouville} when every bounded $\mu$--harmonic function is constant; equivalently, the Poisson boundary associated with the random walk on $G$ by the law of $\mu$ is trivial (see \cite{Furstenberg-semi, K00} for connections between harmonic functions and the Poisson boundary). Similarly, a group is called \emph{Liouville} when it admits a non-degenerate Liouville probability measure. A group is called \emph{Choquet--Deny} when it is Liouville for every probability measure. 

Blackwell \cite{Blackwell1955} proved that the integer lattice $\ZZ^d$ is Choquet--Deny, and a similar result was proved for abelian groups by Choquet and Deny~\cite{Choquet-Deny1960}. Dynkin and Maljutov~\cite{Dynkin-Maljutov61} showed that countable nilpotent groups are also Choquet--Deny. On the other hand, Furstenberg \cite{Furstenberg71} showed that every non-degenerate probability measure on a non-amenable group is non-Liouville and conjectured that every amenable group is Liouville. Rosenblatt~\cite{Rosenblatt1981} and Kaimanovich--Vershik~\cite{K-Vershik83} proved Furstenberg's conjecture independently. Kaimanovich and Vershik~\cite{K-Vershik83} provided the first examples of amenable groups (lamplighter groups) that are not Choquet--Deny. In other words, there exist groups that admit both Liouville and non-Liouville probability measures. They also conjectured that every group with exponential growth is not Choquet--Deny. Erschler provided more examples of Liouville groups, such as finitely presented solvable groups~\cite{Erschler2004} and groups of intermediate growth~\cite{Erschler2004a}, which are not Choquet--Deny. 

It turns out that the structure of the conjugacy classes of a group plays a crucial role in determining whether a countable group is Choquet--Deny. A group is called an \emph{ICC group} when every non-identity element has infinitely many conjugates. Jaworski \cite{Jaworski2004} showed that when a group does not have any ICC quotients (such groups are known as hyper-FC-central), then it is Choquet--Deny. The second author, in collaboration with Hartman, Tamuz, and Vahidi Ferdowsi~\cite{Josh19}, proved that every countable group with an ICC quotient is non-Liouville. Thus, a group is Choquet--Deny if and only if it is hyper-FC-central, thereby resolving the conjecture of Kaimanovich and Vershik. 

There has been much effort devoted to determining whether a group is Liouville for one, all, or a class of probability measures; see \cite{Bartholdi-Nekrashevych-Kaimanovich2013, Amir-Omer-Virag2013, Amir-Virag2014,Amir-Angel-Bon-Virag2016,Erschler-Kaimanovich,BNZheng25} and references therein. When a probability measure is non-Liouville, much research has focused on describing the Poisson boundary in terms of the geometric or algebraic properties of the given group (this is known as the boundary identification problem). For more details, see \cite{K00, Erschler-survey, Tianyi-ICM,Chawla-Forghani-Frisch-Tiozzo22p, frisch-Silva2024} and references therein. 

For finitely generated groups, it is an open question whether all symmetric finitely supported probability measures on a given finitely generated group are either all Liouville or all non-Liouville simultaneously; this is called the \emph{Stability Problem}. An immediate consequence of an affirmative answer to the Stability Problem is that the Liouville (or non-Liouville) property of finitely supported symmetric probability measures on a given finitely generated group is closed under finite convex combinations. This question naturally fits into a broader context. Kaimanovich posed the question of whether the set of Liouville probability measures of a given group is closed under finite convex combinations at the Dynamics of (Semi-)Group Actions Conference at the University of Lodz in Poland; see also the discussion under Problem~E in \cite{ForghaniKaimanovich2025}, which studied the analogue of the question in the context of the equivalent Poisson boundary. The answer to Kaimanovich's question is clear for non-amenable groups or hyper-FC-central groups. However, this question was not known for any amenable (non-hyper-FC-central) group.

The goal of this paper is to provide a negative answer to Kaimanovich's question. Moreover, we show that the convexity property of Liouville measures does not hold for every non-hyper-FC-central countable amenable group. We even prove a stronger statement, which shows that secret sharing surprisingly appears in the context of convex combinations of Liouville measures. This demonstrates that convex spaces of measures can have an extremely flexible range of possible Liouville subsets.

\begin{thm}[= Theorem~\ref{thm:main1}]\label{thm:A}
For every non-hyper-FC-central countable amenable group  and every $k\geq 2$, there exists a sequence of symmetric, non-degenerate probability measures 
such that any convex combination of $k-1$ of these measures is Liouville, and any other 
(finite or infinite) convex combination of them is non-Liouville. 
\end{thm}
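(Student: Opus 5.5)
The plan is to combine two known constructions: the non-Liouville construction of \cite{Josh19} for groups with an ICC quotient, and the Rosenblatt/Kaimanovich--Vershik construction of Liouville measures on amenable groups. The new ingredient is a secret-sharing device that makes non-Liouvilleness appear only when at least $k$ distinct measures are mixed.

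\textbf{Step 1 (reduction to ICC).} First reduce to the case where $G$ itself is ICC. Since $G$ is not hyper-FC-central, it has a nontrivial ICC quotient $\pi\colon G\to Q$, and $Q$ is amenable. Pulling harmonic functions back from $Q$ shows that if $\pi_*\mu$ is non-Liouville then so is $\mu$. The converse direction, keeping Liouville measures Liouville, does not transfer automatically. So I would rather run the construction directly on $G$, using the ICC structure of $Q$ only to find switching elements.

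\textbf{Step 2 (the mechanism in \cite{Josh19}).} In \cite{Josh19} one chooses a sparse sequence of elements $g_n$ and weights $\mu(g_n^{\pm1})$ with $\sum \mu(\{g_n^{\pm 1}\})<\infty$ but not too small. The ICC property supplies, at each scale, conjugates that fail to commute with the current position of the walk. This produces a tail event, namely the limiting ``sign'' of a switch occurring at each scale, that is not trivial. Their proof is quantitative: the walk at time $\sim t_n$ has spread over a finite set $F_n$, and $g_n$ is chosen so that $g_n$ does not commute with $F_n$, after ICC-conjugating.

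\textbf{Step 3 (secret sharing).} Fix $k$. Build a sequence of finite sets $F_1\subset F_2\subset\cdots$ exhausting $G$, together with a rapidly increasing sequence of scales $n_1<n_2<\cdots$. At scale $j$, assign to measure $\mu_i$ a switching element $s_{j,i}$ such that:
\begin{itemize}
\item any product using fewer than $k$ distinct indices $i$ acts ``trivially at scale $j$'', in the sense that it commutes with the relevant finite piece of the past trajectory;
\item products involving $k$ distinct indices produce a non-commuting element.
\end{itemize}
One way to realise this is to take the $s_{j,i}$ to be conjugates $h_{j,i} c_j h_{j,i}^{-1}$ and arrange, via ICC and the amenable F\o lner structure, that a commutator condition is satisfied only by $k$-fold combinations.

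Each $\mu_i$ is then $\tfrac12\nu+\tfrac12\sum_j \varepsilon_j\delta_{s_{j,i}^{\pm1}}$, where $\nu$ is a Kaimanovich--Vershik Liouville measure built from F\o lner sets (for fully supported, symmetric and non-degenerate). Liouvilleness of a combination of $k-1$ of these measures is proved by the Rosenblatt-type entropy/coupling argument. The F\o lner averaging in $\nu$ ``forgets'' each switch faster than the sparse switches can accumulate information, because every combination of fewer than $k$ generators stays inside an almost-invariant region. Non-Liouvilleness of any combination involving at least $k$ distinct measures, including infinite ones, follows as in \cite{Josh19}. Infinitely many times the walk uses $k$ distinct switches at the same scale with probability bounded below, and Borel--Cantelli then gives a nontrivial tail $\sigma$-algebra.

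\textbf{Main obstacle.} The hardest step is the Liouville half of Step 3. One must show that mixtures of $k-1$ measures are Liouville uniformly, including for arbitrary convex weights, while the same switch elements become detectable with $k$ indices. I would first try an entropy criterion: show $H(\mu^{*n})=o(n)$ is not available, since entropy may be infinite. So instead I would use a direct coupling: show $\|g\lambda^{*n}-\lambda^{*n}\|\to0$ for generators $g$, by designing $\nu$ so that at time scale $n_j$ its F\o lner averaging is invariant under the subgroup generated by any $k-1$ of the $s_{j,i}$. Choosing the scales inductively, with each new scale much larger than all previous data, is where the delicate bookkeeping lies.
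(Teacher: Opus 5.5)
There is a genuine gap: the threshold mechanism in your Step~3 cannot work as described. You try to store the ``secret'' algebraically in the elements $s_{j,i}$ of a single scale $j$. Taken literally, this is impossible. Since $k\ge 2$, each $s_{j,i}$ (a one-index product) must commute with the finite set $P$ of past positions, and then so does every product of them, because the centralizer of $P$ is a subgroup. You give no other condition, and no construction in a general amenable ICC group, that would realize a $k$-out-of-many threshold at one scale.

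The same defect affects your Liouville argument. $\varepsilon$-F{\o}lner invariance is a condition on each element separately, and exact invariance is governed by the stabilizer, which is again a subgroup. So if the F{\o}lner part of $\nu$ at scale $n_j$ absorbs every $(k-1)$-subset of $\{s_{j,i}\}_i$, it absorbs all of them, and your argument as sketched would equally ``prove'' that $k$-fold combinations are Liouville. Moreover, with $\mu_i=\frac12\nu+\frac12\sum_j\varepsilon_j\delta_{s_{j,i}^{\pm1}}$, every convex combination puts the same mass $\frac12\varepsilon_j$ on the switching part at scale $j$. Hence the law of the sequence of scales and types (F{\o}lner or switching) does not depend on the weights $b$ at all, and any distinction would have to be purely algebraic, which the previous point rules out.

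Finally, ``switches at infinitely many scales with probability bounded below, then Borel--Cantelli'' is not a non-Liouville criterion. The mechanism behind Theorem~\ref{thm:erchler-kaimanovich} is that, by (\ref{eq:simple1}), \emph{eventually every} record scale is simple and is reached through a switching set. Theorem~\ref{thm:trivial} shows that infinitely many F{\o}lner records (fit times) force Liouville no matter how many switches occur.

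The missing idea is to share the secret over the \emph{set of scales} rather than over group elements. The paper uses one F{\o}lner--switching sequence $(F_n,S_n,\Phi_n)$ for all the measures. Here $F_n$ is $\frac1n$-F{\o}lner for words of length $5\Phi_n$ in $S_1,\dots,S_{n-1},F_1,\dots,F_{n-1}$, and $S_n$ is switching for such words in $S_1,\dots,S_{n-1},F_1,\dots,F_n$. It then takes a $k$-cover $(A_j)$ of $\NN$ and sets $\mu_j=\sum_{n\in A_j}p_n\U(S_n)+(1-p(A_j))\sum_n q_n\U(F_n)$.

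Any $k-1$ of the $A_j$ miss infinitely many scales. Building the $A_j$ from blocks $[n_i,n_{i+1})$ with $q_{n_i}/n_i\ge\sum_{m\ge n_{i+1}}p_m$ yields a long gap: at such a scale $m$, the mass of $F_m$ dominates all switching mass at scales $\ge m$. So with probability at least $\frac12$ the walk first enters the scales $\ge m$ through a F{\o}lner set. Thanks to the ladder $\Phi$, this happens early enough that this F{\o}lner set absorbs the whole prefix and $g$. A zero--one argument then gives infinitely many fit times, and hence $\|g\mu^{*n}-\mu^{*n}\|\to 0$.

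Any $k$ of the $A_j$ cover $\NN$. The switching weight at scale $n$ of the combination is then at least $(1-\bar b)p_n$, where $\bar b<1$ is the sum of the $k-1$ largest weights $b_i$. Condition (\ref{eq:pq}) together with Theorem~\ref{thm:erchler-kaimanovich} then gives non-Liouville.

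Your Step~1 worry is also unfounded. Liouvilleness passes in both directions between $G$ and its quotient by the hyper-FC-center \cite[Proposition~5.10]{Erschler-Kaimanovich}, and this is how the paper reduces to the ICC case.
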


Our proof uses both \emph{F{\o}lner sets}   and \emph{switching sets}, see Section~\ref{sec:FS} for their definitions. The existence of F{\o}lner sets is equivalent to the amenability of the group. Kaimanovich--Vershik \cite{K-Vershik83} used them to construct Liouville probability measures. On the other hand, switching sets were introduced by Hartman--Frisch-Tamuz--Vahidi Ferdowsi \cite{Josh19} to construct  non-Liouville measures. We inductively build our measures by intertwining measures coming from F{\o}lner sets and switching sets.  The proof  that the convex combinations are non-Liouville follows from the general framework of Erschler--Kaimanovich \cite{Erschler-Kaimanovich}. On the other hand, the Liouville property of the extreme points follows the strategy of Kaimanovich--Vershik \cite{K-Vershik83}. The main novelty comes from the construction of the measures themselves. This construction relies on probabilistic notions, which we call  \emph{long gaps} (see Definition~\ref{df:gap}) and \emph{fit times} (see Definition~\ref{df:fittime}). 

Liouville probability measures obtained using F{\o}lner sets typically have infinite entropy.  Probability measures with infinite entropy on groups often exhibit strange behavior. For example, Kaimanovich \cite{Kaimanovich83-examples}
provided examples of measures on group $L$ such that the Liouville property holds for left but not right random walks. Kaimanovich \cite{Kaimanovich2024} found a measure on a product group $G \times H$ with non-Liouville measures such that the projections to $G$ and $H$ are both Liouville measures. Alpeev \cite{alpeev2021examples, alpeev2024secret} showed that these results are possible if and only if $L, G,$ and $H$ are amenable non-hyper-FC-central groups. Chawla--Frisch \cite{chawla-Frisch2025} found probability measures on free groups whose Poisson boundary is larger than the geometric boundary. All of these results are impossible when we restrict to finite entropy probability measures \cite{K-Vershik83, erschler2022poisson,Chawla-Forghani-Frisch-Tiozzo22p}.

Perhaps surprisingly, our results are \emph{not} a purely infinite entropy phenomenon. It is not possible to extend our results to finite entropy measures for all amenable non-hyper-FC-central groups.
Erschler \cite{Erschler2004a} proved that lamplighter groups over $\ZZ^d$ for $d\geq 3$ are non-Liouville for all non-degenerate finite entropy measures. However, we establish an analog of Theorem~\ref{thm:A} under the assumption of finite entropy when a group is \emph{finitely Liouville}, that is, every symmetric finitely supported probability measure on the group is Liouville. Groups with subexponential growth, the lamplighter groups over $\ZZ$ and $\ZZ^2$, and the infinite symmetric group of finite permutations of $\NN$ are examples of finitely Liouville  groups. It is unknown whether it is possible to have a finite entropy Liouville measure on a non-finitely Liouville group (conditional on this being indeed impossible, the result would be a complete classification of such groups).

\begin{thm}[=Theorem~\ref{thm:entropy}]\label{thm:B}
For every finitely Liouville  non-hyper-FC-central group and $k\geq 2$,  there exists a sequence of symmetric, non-degenerate probability measures with finite entropy
such that any convex combination of $k-1$ of these measures is Liouville, and any other 
(finite or infinite) convex combination of them is non-Liouville.  
\end{thm}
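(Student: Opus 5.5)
We would follow the same architecture as for Theorem~\ref{thm:A}, replacing F{\o}lner-set measures by finitely supported symmetric measures, which are Liouville by the finitely Liouville hypothesis. Since the group is non-hyper-FC-central, it has an ICC quotient. This gives the switching sets of \cite{Josh19}: elements $s$ and finite sets such that a random walk which, at sparse prescribed times, takes a step with a switching element records information at infinitely many locations that later steps cannot erase. We fix an enumeration of the group, a target $k$, and build measures $\mu_1,\mu_2,\dots$ inductively. Each is written as
\[
\mu_i=\sum_n \epsilon_{n}\,\nu_{i,n},
\]
where each $\nu_{i,n}$ is symmetric and finitely supported, and the weights $\epsilon_n$ decay fast enough that $H(\mu_i)<\infty$; for example, $\epsilon_n\log|\mathrm{supp}\,\nu_{i,n}|$ should be summable. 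The measures $\nu_{i,n}$ are of two kinds. There are large symmetric finitely supported ``mixing'' measures, each Liouville with total variation mixing at a chosen time scale. There are also symmetric ``switch'' measures, supported on a switching element $s_n$ and its inverse, placed at a scale that only a combination of $k$ distinct indices will reach.

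For the non-Liouville direction we would use the Erschler--Kaimanovich framework \cite{Erschler-Kaimanovich}, as in Theorem~\ref{thm:A}. Arrange that the switch at level $n$ is split into $k$ ``shares''. One way is to let $\mu_i$ carry, at level $n$, the component $s_{n,j}$ with $j\equiv i \pmod k$. The full switch then appears within a single fit time (Definition~\ref{df:fittime}) only if $k$ distinct measures contribute. In any convex combination of at least $k$ of the $\mu_i$, with infinite combinations handled by tail weights, the shares assemble infinitely often with positive probability. The switching-set argument then yields a nontrivial tail event, so the combination is non-Liouville.

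For the Liouville direction, take a combination $\mu$ of at most $k-1$ measures. At every level, at least one share is missing. Here we would use long gaps (Definition~\ref{df:gap}): between consecutive appearances of partial switch steps, the walk spends a long stretch driven by the finitely supported Liouville components. The time scales are chosen inductively so that, by that point, the convolution powers of the level-$n$ Liouville part satisfy $\|\nu^{*m}g-\nu^{*m}\|\to0$ uniformly over the finitely many relevant $g$. This is guaranteed because finitely supported symmetric measures are Liouville, via the Kaimanovich--Vershik criterion \cite{K-Vershik83}. Incomplete shares therefore get ``washed out'', and we conclude $\|\mu^{*n}g-\mu^{*n}\|\to0$ for the generators $g$, which gives the Liouville property.

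The main obstacle is reconciling finite entropy with the inductive choice of scales. The mixing time of the level-$n$ finitely supported measure may be astronomically large. Its entropy contribution must remain summable, while its weight must still be large enough that the gaps dominate. We would handle this by choosing $\epsilon_n$ after the support size and mixing time are fixed at each stage. We must also check that this choice does not destroy the probability that $k$ shares co-occur infinitely often; this is a Borel--Cantelli computation with matched weights.
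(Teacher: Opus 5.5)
Your proposal has genuine gaps in both directions. Start with the non-Liouville side. The assignment ``$\mu_i$ carries the share $s_{n,j}$ with $j\equiv i \pmod k$'' is not a threshold scheme. The $k$ distinct measures $\mu_1,\mu_{k+1},\dots,\mu_{(k-1)k+1}$ carry the same share at every level. So their combination, and any infinite combination inside one residue class, never assembles a switch. Your Liouville reasoning then applies verbatim and would declare it Liouville, contradicting the statement. With infinitely many measures you need a family in which any $k$ members cover $\NN$ and any $k-1$ miss infinitely many points, i.e.\ a $k$-cover as in Lemma~\ref{lem:kcovering}.

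More fundamentally, splitting a switching element into group-element shares that arrive at different times is invisible to the Erschler--Kaimanovich criterion (Theorem~\ref{thm:erchler-kaimanovich}). That criterion needs a \emph{single} increment at a simple record level to be switching for everything before it. If each share is switching, one $\mu_i$ is already non-Liouville; if not, the interleaved increments destroy the product. Fit times, which concern a F{\o}lner increment at a record level, play no role on this side.

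The paper instead does the secret sharing on the \emph{weights}. It fixes one finite-entropy, fully supported, non-Liouville measure $\mu^{t,p,q}$ (Definition~\ref{df:mu2}, built from an enumeration of $G$ and switching elements, with no F{\o}lner sets). It cuts this measure into finitely supported symmetric blocks $\tm_i$ (Lemma~\ref{lem:mui construction}) and sets $\tn_n=\sum_{i\in A_n}\tm_i+\sum_{i\notin A_n}r_i\tm_i$. Any combination with at least $k$ atoms then has all block coefficients in $[\tilde b/2,2]$. It is therefore comparable to $\mu^{t,p,q}$ and is non-Liouville by Lemma~\ref{lem:nont}.

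On the Liouville side, the obstacle you defer to the end is exactly where a F{\o}lner/long-gap argument breaks. The level-$n$ mixing part must absorb all earlier increments up to time $\Phi_n$, roughly the time needed to reach level $n$. That time is dictated by the weights, so choosing $\epsilon_n$ after fixing supports and mixing times is circular.

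Mixing of $\nu^{*m}$ also does not transfer to products in which $\nu$-steps are interleaved with other increments. The bound $\|g\,\theta*\vartheta-\theta*\vartheta\|\le\|g\theta-\theta\|$ only uses invariance of a prefix, and an uninterrupted run of $m$ such steps has negligible probability. So you are forced back to one-step near-invariant finitely supported measures. Those exist in every amenable group (uniform measures on symmetric F{\o}lner sets), and nothing else in your bookkeeping uses the finitely Liouville hypothesis. If the scheme worked, it would give finite-entropy Liouville measures on $\ZZ_2\wr\ZZ^3$, contradicting Erschler's theorem quoted in the introduction.

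The missing idea is to prove the Liouville property through the entropy criterion $h(\nu)=0$.
\begin{enumerate}
\item Compactness of $\P_{sym}(V)$ together with the finitely Liouville hypothesis gives a single $N$ with $H(\lambda^{*N})<\ep N$ for all $\lambda\in\P_{sym}(V)$ (Lemma~\ref{lem:uniform}).
\item Continuity of $\lambda\mapsto H(\lambda^{*N})$ in the metric $\D(\theta,\lambda)=\|\theta-\lambda\|+|H(\theta)-H(\lambda)|$ (Lemma~\ref{lem:continuity}) propagates $h<\ep$ to a $\D$-neighborhood of $\P_{sym}(V)$ (Lemma~\ref{lem:neighbor}).
\item The blocks are chosen so that the tail beyond $V_1\cup\dots\cup V_n$ has tiny mass and entropy. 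Hence any such $\nu$ whose $(n+1)$-st coefficient is at most $2r_{n+1}$ lies within $\delta_n$ of $\P_{sym}(V_1\cup\dots\cup V_n)$, so $h(\nu)<1/n$ (Lemma~\ref{lem:zeronu}).
\item For a combination of $k-1$ of the $\nu_n$ this occurs for infinitely many $n$, so $h(\nu)=0$.
\end{enumerate}
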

By Derriennic \cite{Der80} or Kaimanovich--Vershik \cite{K-Vershik83}, a probability measure with finite entropy is Liouville if and only if its asymptotic entropy is zero; see Section~\ref{sec:entropy} for details and definitions.  Our proof of Theorem~\ref{thm:B} as Theorem~\ref{thm:A} uses Kaimanovich-Erschler \cite{Erschler-Kaimanovich} to prove non-Liouville property for convex combinations. However, the proof of the Liouville property uses a totally different approach. We show the asymptotic entropy is zero to prove Theorem~\ref{thm:B}. Our proof utilizes  the $\D$--metric, see Definition~\ref{df:dmetric}, which makes the map of assigning a probability measure to the entropy of its $n$-fold convolution continuous (in contrast with the $\ell^1$-norm, see Remark~\ref{re:cont}). We leverage the continuity of $\D$-metric to approximate the entropy of fully supported measures with  finitely supported ones, where we apply the finitely Liouville hypothesis. 

In the context of finitely generated groups, it would be extremely interesting, but probably very difficult, to investigate Theorem~\ref{thm:A} in the class of finitely supported probability measures. It is unknown which amenable groups admit a non-degenerate finitely supported Liouville probability measure. Moreover, the only known examples beyond Choquet--Deny groups are finitely Liouville groups. Thus, all known examples are covered by groups in Theorem~\ref{thm:B}. 
\subsection*{Structure of the paper:} In Section~\ref{sec:record}, we only consider probability measures on $\NN$ to recall basic results regarding record times. We also define long gaps and fit times, which will be employed for showing a probability measure is Liouville. Section~\ref{sec:FS} is devoted to F{\o}lner and switching sets, and we remind the general construction of non-Liouville measures for countable ICC  groups. In Section~\ref{sec:liouville}, we construct Liouville probability measures using long gaps and prove Theorem~\ref{thm:A}. In the last section, we recall basic properties of entropy, define $\D$--metric and  apply it to prove Theorem~\ref{thm:B}.

\subsection*{Acknowledgment}
B. Forghani was supported by NSF grant  DMS-2246727 and J. Frisch was supported by NSF grant DMS-2348981. 

 \section{Record Times, Fit Times, Long Gaps}\label{sec:record}
We are mainly interested in probability measures on $\NN$ that can be written as a convex combination of two measures, which at least one of them is fully supported on $\NN$. In this section, we introduce notions of record times, long gaps and fit times for probability measures on $\NN$. We will use these notions later for random walks on groups. 

\subsection*{Notation} We denote the values of an integer-valued function  or a probability measure $f$ on $\NN$ by $f_n$  or $f(n)$ interchangeably.

\subsection{Record Times} Let $\al$ be a probability measure on $\NN$. Let $(\xi_i)_{i\geq 1}$ be a sequence of independent and identically distributed
(i.i.d.) random variables according to $\al$. A record time is the first time in the sequence when the value exceeds all the values observed up to that time.  We define the sequence of record times $(\ta_n)_{n\geq1}$ and record values $(\xi_{\ta_n})_{n\geq1}$ inductively by $\tau_1=1$ and for $n\geq 2$
\begin{equation}\label{eq:record}
M_n= \max\{\xi_1,\dots, \xi_n\}, \ \ \ \tau_k = \min\{ i> \tau_{k-1}\ : \xi_i= M_i  \}.
\end{equation}
We will need the following lemma, which is an application of the classic Borel-Cantelli lemma, about controlling record times and record values. 

\begin{lemma}(\cite[Lemma~2.12, Lemma~217]{Erschler-Kaimanovich})\label{lem:phi}
Let  $\al$ be an infinitely supported probability measure on $\NN$. 
Then, there exist non-constant and non-decreasing functions $\varphi,\Phi:\NN \to \NN$ such that  almost surely
$$
\varphi_{n}  < M_n \,, \hspace{1cm} \tau_{n+1} \leq \Phi({\xi_{\ta_n}}) \,,
$$
for all sufficiently large  $n$. 
\end{lemma}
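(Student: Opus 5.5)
We construct the two functions separately, using the first Borel--Cantelli lemma in each case. Write $F(m)=\al(\{1,\dots,m\})$ for the distribution function of $\al$. Since $\al$ is infinitely supported, $F(m)<1$ for every $m$ and $F(m)\to 1$.

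\emph{Lower bound on $M_n$.} We have $\PP(M_n\le m)=F(m)^n$. For each $m$ the sequence $F(m)^n$ decays geometrically in $n$, so we can choose an increasing sequence of integers $N_1<N_2<\cdots$ with $\sum_{n\ge N_m} F(m)^n<2^{-m}$. Now define $\varphi_n=m$ for $N_m\le n<N_{m+1}$, and $\varphi_n=0$ for $n<N_1$. This $\varphi$ is non-decreasing and unbounded, hence non-constant. Moreover
\[
\sum_n \PP(M_n\le \varphi_n)\le \sum_m \sum_{N_m\le n<N_{m+1}} F(m)^n<\sum_m 2^{-m}<\infty .
\]
By Borel--Cantelli, almost surely $\varphi_n<M_n$ for all large $n$.

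\emph{Upper bound on the next record time.} Condition on the $n$-th record value $\xi_{\tau_n}=v$. By the strong Markov property at the stopping time $\tau_n$, the waiting time $\tau_{n+1}-\tau_n$ is geometric with success probability $p_v=1-F(v)>0$. Hence
\[
\PP\bigl(\tau_{n+1}-\tau_n> t\mid \xi_{\tau_n}=v\bigr)=F(v)^t .
\]
The difficulty is that $\Phi$ must bound $\tau_{n+1}$ itself, not only the gap $\tau_{n+1}-\tau_n$, and $\Phi$ may depend only on $v$. So we first control $\tau_n$ in terms of $v$.

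\emph{Controlling $\tau_n$ by $\xi_{\tau_n}$.} The event $\{\xi_{\tau_n}\le v\}$ coincides with $\{M_{\tau_n}\le v\}$. Consider the event that the maximum stays at most $v$ up to a large time $T_v$ and yet a new record occurs after $T_v$ with value at most $v$. On this event $M_{T_v}\le v$, and
\[
\PP(M_{T_v}\le v)=F(v)^{T_v}.
\]
Choose $T_v$ increasing in $v$ so that $\sum_v F(v)^{T_v}<\infty$ and $\sum_v F(v)^{T_v}\,(\text{number of record values }\le v)$ is controlled. Note that the record values are strictly increasing, so at most one record has value exactly $v$.

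Borel--Cantelli over $v$ then gives, almost surely for all large $v$: if a record has value $v$, it occurs before time $T_v$. Indeed, a record with value $v$ at a time $\ge T_v$ forces $M_{T_v}\le v$, and we sum over $v$.

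Next, set $S_v$ with $\sum_v F(v)^{S_v}<\infty$. For each value $v$ there is at most one $n$ with $\xi_{\tau_n}=v$, so by Borel--Cantelli again, almost surely the gap after the record with value $v$ is at most $S_v$ for all large $v$. Finally take
\[
\Phi(v)=\max_{u\le v}\,(T_u+S_u)+v .
\]
This is non-decreasing and non-constant. Since $\xi_{\tau_n}\to\infty$ almost surely (the measure is infinitely supported), "for all large $v$" translates into "for all large $n$", which yields $\tau_{n+1}\le \Phi(\xi_{\tau_n})$.

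\emph{Main obstacle.} The key technical point is the indexing by values rather than by $n$. Because the record values are strictly increasing, each value $v$ is hit by at most one record. This is what makes the sums over $v$ legitimate applications of Borel--Cantelli, despite the randomness of $n\mapsto\xi_{\tau_n}$.
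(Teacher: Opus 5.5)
Your construction of $\varphi$ is correct, and the rest of your argument is correct if ``record'' means a strict record, as the paper's prose says. However, the displayed definition \eqref{eq:record} declares $i$ a record time when $\xi_i=M_i$, so ties count. The Remark after Theorem~\ref{thm:erchler-kaimanovich} (``for sufficiently large record values, each occurs exactly once'') confirms that this weak convention, taken from Erschler--Kaimanovich, is the intended one. Under it, the fact you call the key technical point is false: a value $v$ can be the value of several records. So the bound $F(v)^{S_v}$ for the event that some record of value $v$ is followed by a gap longer than $S_v$ is not justified as written. You would have to multiply by the expected number of records of value $v$, which is at most $\al(\{v,v+1,\dots\})/\al(\{v+1,v+2,\dots\})<\infty$.

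More to the point, the whole $S_v$ step is superfluous, because your $T_v$ step already proves the lemma. On $\{M_{T_v}>v\}$, if $\xi_{\ta_n}=v$ then $\xi_j\le v$ for all $j\le \ta_n$. Hence the first index $i$ with $\xi_i>v$ satisfies $\ta_n<i\le T_v$ and $\xi_i=M_i$. So $\ta_{n+1}\le T_v$ under either convention, and $\Phi(v)=\max_{u\le v}T_u$ works. This $\Phi$ is unbounded because $T_v\to\infty$, which is forced by $\sum_v F(v)^{T_v}<\infty$ together with $F(v)\to 1$. The extra condition you impose on $T_v$ involving the number of record values is also unnecessary.

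The paper does not prove this lemma; it quotes Erschler--Kaimanovich. Its proof of Lemma~\ref{lem:universal} nevertheless shows the intended mechanism. There $\Phi$ is the generalized inverse of $\varphi$, namely $\Phi_j=\min\{i:\varphi_i\ge j\}$, so $\varphi_{\Phi_j}\ge j$. Once $\Phi_j$ is past the random time after which $M_n>\varphi_n$, this gives $M_{\Phi_j}>j$. Applied with $j=\xi_{\ta_n}\to\infty$, this is exactly the argument above.

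So the second inequality is a corollary of the first. It needs no separate Borel--Cantelli argument, no strong Markov property, and no indexing by record values. Your $T_v$ step is this argument in disguise, with $T_v$ chosen directly rather than read off from $\varphi$. Splitting $\ta_{n+1}=\ta_n+(\ta_{n+1}-\ta_n)$ only adds length and introduces the convention-dependent step.
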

 \begin{definition}[\bf Ladder]
A non-decreasing function $\Phi$ in Lemma~\ref{lem:phi} is called  an $\alpha$--ladder. 
\end{definition}
We are interested in finding a universal ladder to control the record times when a probability measure can be written as a convex combination of two probability measures.
\begin{lemma}\label{lem:universal}
Let $\bar\al$  be a fully supported probability measure  on $\NN$. 
Then there exists a universal $\bar\al$-ladder $\Phi$, which is also an $\al$--ladder 
for any probability measure $\al$ which satisfies $d \bar\al\leq \al$ for some $0<d\leq 1$. 
\end{lemma}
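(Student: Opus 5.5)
We will reprove the Borel--Cantelli argument behind Lemma~\ref{lem:phi} and check that every estimate depends on $\al$ only through lower bounds on its tails, which $d\bar\al\le\al$ gives uniformly. Write $\bar T(m)=\bar\al(\{m+1,m+2,\dots\})$; since $\bar\al$ is fully supported, $\bar T(m)>0$ for every $m$. The hypothesis gives $\al(\{j: j>m\})\ge d\,\bar T(m)$ for all $m$, so $\al$ is infinitely supported and Lemma~\ref{lem:phi} applies to it. What has to be shown is that a single $\Phi$, built from $\bar\al$ alone, controls the waiting time to the next record.

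\textbf{Step 1: the key tail estimate.} Condition on the record value $\xi_{\ta_n}=m$. By the strong Markov property at the stopping time $\ta_n$, the subsequent $\xi_i$ are i.i.d.\ with law $\al$, so the waiting time $\ta_{n+1}-\ta_n$ is geometric with success probability $p=\al(\{j:j>m\})\ge d\,\bar T(m)$. Therefore
\[
\PP\bigl(\ta_{n+1}-\ta_n> L \,\big|\, \xi_{\ta_n}=m\bigr)=(1-p)^L\le \exp\bigl(-d\,\bar T(m)L\bigr).
\]

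\textbf{Step 2: choice of $\Phi$.} The constant $d$ is not known in advance, so the growth of $\Phi$ must absorb every $d\in(0,1]$. Set
\[
\Phi_0(m)=\left\lceil \frac{m^2}{\bar T(m)}\right\rceil ,
\]
and let $\Phi$ be a non-decreasing, non-constant majorant of $\Phi_0$ (for example a running maximum, enlarged further if needed so that it dominates $\ta_n$; see Step~3). With $L=\Phi_0(m)$ the bound in Step~1 becomes $e^{-d m^2}$, and $\sum_m e^{-dm^2}<\infty$ for every $d>0$.

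\textbf{Step 3: Borel--Cantelli, and the main obstacle.} Record values are strictly increasing, so each value $m$ occurs as a record value at most once. Hence
\[
\sum_n \PP\bigl(\ta_{n+1}-\ta_n>\Phi_0(\xi_{\ta_n})\bigr)\le \sum_m \PP(m\text{ is a record value})\,e^{-dm^2}\le \sum_m e^{-dm^2}<\infty .
\]
By Borel--Cantelli, almost surely $\ta_{n+1}-\ta_n\le\Phi_0(\xi_{\ta_n})$ for all large $n$.

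Converting this increment bound into the stated bound $\ta_{n+1}\le\Phi(\xi_{\ta_n})$ is where I expect the main difficulty. The plan is to argue by induction that $\ta_n$ is itself controlled by earlier record values. Since $\xi_{\ta_k}\le\xi_{\ta_n}-(n-k)$, we get, for large $n$,
\[
\ta_{n+1}\le C+\sum_{k\le n}\Phi_0(\xi_{\ta_k})\le C+m\,\Phi_0(m),\qquad m=\xi_{\ta_n},
\]
where $C$ is a random constant. The random $C$ is eventually absorbed because $\xi_{\ta_n}\to\infty$ almost surely. Thus $\Phi(m)=(m+1)\Phi_0(m)$, made non-decreasing, works simultaneously for $\bar\al$ and for every $\al$ with $d\bar\al\le\al$.

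For the function $\varphi$, the same tail comparison gives $\PP(M_n\le\varphi_n)\le(1-d\bar T(\varphi_n))^n$, which is summable once $\varphi$ grows slowly enough relative to $\bar T$. That requirement depends on $d$, but the statement only asks for universality of $\Phi$, so $\varphi$ may be taken from Lemma~\ref{lem:phi} applied to each $\al$ separately.
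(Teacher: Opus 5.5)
Under the reading in which a record must strictly exceed all earlier values (the paper's verbal description), your argument is correct, and it takes a genuinely different route from the paper. The paper never looks at inter-record increments. It takes $\bar\varphi$ from Lemma~\ref{lem:phi} for $\bar\alpha$ and writes $\alpha=d\bar\alpha+(1-d)\alpha'$. The $\bar\alpha$-samples among $\xi_1,\dots,\xi_n$ then form an i.i.d.\ $\bar\alpha$-sequence of length $Z_n$, with $2Z_n\ge dn$ eventually. The unknown $d$ is absorbed through $\lfloor\sqrt n\rfloor\le dn/2$ for large $n$. This yields a \emph{universal} lower bound $M_n>\varphi_n:=\bar\varphi(\lfloor\sqrt n\rfloor)$. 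The paper then defines $\Phi$ as the generalized inverse of $\varphi$, and the ladder property follows from $M_{\tau_{n+1}-1}=\xi_{\tau_n}$.

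You instead prove an explicit geometric tail bound and absorb $d$ through the factor $m^2$ in $e^{-dm^2}$. Your version is self-contained and gives a concrete $\Phi$ in terms of $\bar T$. The paper's is shorter, uses Lemma~\ref{lem:phi} only as a black box, and makes $\varphi$ universal too. Your closing remark on $\varphi$ is therefore too pessimistic: a non-decreasing $\varphi_n\to\infty$ with $\bar T(\varphi_n)\ge n^{-1/2}$ gives $\PP(M_n\le\varphi_n)\le e^{-d\sqrt n}$, which is summable for every $d>0$.

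There is one step to repair. The displayed definition (\ref{eq:record}) makes $i$ a record time when $\xi_i=M_i$, so ties with the current maximum count. This is why the paper speaks of ``simple records'', meaning record values occurring exactly once. Under this convention record values are only non-decreasing, and your Step~3 fails in two places:
\begin{enumerate}
\item The expected number of record times with value $m$ is $\alpha(\{m\})/\alpha(\{j:j>m\})$. So you can no longer replace $\sum_n\PP(\xi_{\tau_n}=m)$ by the probability that $m$ is a record value. Weighting your estimate $e^{-dm^2}$ by this multiplicity gives a series that can diverge, e.g.\ for $\alpha=\bar\alpha$ with doubly exponentially decaying tails.
\item The bound $n\le\xi_{\tau_n}$, which you use to pass from increments to $\tau_{n+1}$, is false.
\end{enumerate}

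A fix that works under either convention, and removes the summation step entirely, is to put $\sigma_m=\min\{i:\xi_i>m\}$. If $\xi_{\tau_n}=m$, then $\sigma_m>\tau_n$ and $\sigma_m$ is a record time, so $\tau_{n+1}\le\sigma_m$. Moreover $\PP(\sigma_m>\Phi_0(m))=(1-\alpha(\{j:j>m\}))^{\Phi_0(m)}\le e^{-dm^2}$, which is summable in $m$. Borel--Cantelli gives $\sigma_m\le\Phi_0(m)$ for all large $m$. Since $\xi_{\tau_n}\to\infty$, this gives $\tau_{n+1}\le\Phi_0(\xi_{\tau_n})$ for all large $n$, so $\Phi=\Phi_0$ already works. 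Note that $\sigma_m\le L$ is equivalent to $M_L>m$. This is exactly the $\varphi$--$\Phi$ duality the paper uses, which is why its proof does not depend on the record convention.
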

\begin{proof}
Let $\bar{\xi}_1,\dots,\bar{\xi}_n$ be i.i.d. according to $\bar\al$, and $\xi_1,\dots,\xi_n$ be i.i.d. according to $\al$. By Lemma~\ref{lem:phi}, there exists $\bar\varphi$ such that almost every $\max\{\bar{\xi}_1,\dots,\bar{\xi}_n\} > \bar\varphi_n$
for sufficiently large $n$. Since $\al \geq d\bar\al$, we can rewrite $\al$ as a convex combination of $\bar\al$ and another probability measure; $\al = d\bar\al + (1-d)\al'$. Let $Z_n$ be the binomial random variable that
counts the number of samples
drawn according to $\bar\al$ in $n$ random independent samples according to $\al$. By the Law of Large Numbers, almost every $2 Z_n \geq dn$ for sufficiently large $n$. Therefore, 
$$
\max\{\xi_1,\dots,\xi_n\} \geq \bar{\varphi}(Z_n) \geq \bar{\varphi}(\lfloor{\frac{dn}{2}} \rfloor) \geq \bar{\varphi}(\lfloor \sqrt n \rfloor) 
$$
for sufficiently large enough $n$.
Thus it is enough to define $\varphi(n) = \bar\varphi(\lfloor \sqrt n \rfloor)$. We now can define $\Phi$ by the following condition:
$$
\{ (i,j) \in \NN \times \NN : \varphi_i<j \} = \{ (i,j) \in \NN \times \NN :i < \Phi_j \}\,.
$$
\end{proof}

\subsection{Long gaps and Fit times} We introduce two new notions of long gaps and fit times, which later will be employed to construct Liouville probability measures. 
\begin{definition}\label{df:alpha}
 Let   $p$ and $q$ be two infinitely supported probability measures on $\NN$  such that the support of $q$ is $\NN$. For $ 0< t < 1$, we define
 $$
 \al_{t,p,q} = tp + (1-t) q .
 $$
\end{definition}
 Let $(\xi_i)_{i\geq1}$ be a sequence of independent and identically distributed 
random variables according to $\al_{t,p,q}$.  We can study $\xi_i$ using a Bernoulli random variable $\zeta_i$ on $\{0,1\}$  such that 
$$
P(\xi_i=n, \zeta_i=1)=tp_n, \hspace{.5cm} P(\xi_i=n, \zeta_i=0)=(1-t)q_n, \hspace{.5cm} P(\zeta_i=1)=t.
$$
\begin{definition}[\bf Long gap]\label{df:gap}The probability measure $\al_{t,p,q}$  (Definition~\ref{df:alpha}) has  a \emph{long gap} for $q$ whenever  there are infinitely many $m$ such that 
 $$
t\sum_{i=m}^\infty p_i \leq (1-t) q_m.
 $$
 \end{definition}
 We will show below that ``conditional'' long gaps can exist. For a given $A \subseteq \NN$ with $p(A)>0$,  we define the probability measure $p^A$ as the conditional measure of $p$, given $A$. Thus, $p^A(n) = p_n/p(A)$ for every $n \in A$, and zero otherwise. 
 \begin{definition}[\bf $k$--Cover]\label{df:cover}
Let $k \in \NN$ and $(A_j)_{j\in J}$ be a family of subsets in $\NN$. We say $(A_j)_{j\in J}$ is a $k$--cover whenever for every distinct $i_1, \dots, i_k$ in $J$:
\begin{enumerate}
    \item $A_{i_1}\cup \dots \cup A_{i_k}=\NN$, 
    \item $\NN\backslash \Big(A_{i_1}\cup \dots \cup A_{i_{k-1}}\Big)$ is infinite.
\end{enumerate}
\end{definition}
We are interested in those $k$--covers that lead to constructing probability measures with long gaps. First, note that for every partition of $\NN$, one can easily construct a $k$--cover.
\begin{lemma}\label{lem:kcovering}
    Let $(D_i)_{i\geq 1}$ be a partition of $\NN$. Let $k \in \NN$ and $\psi: \NN \to \NN^k$ be a bijection. Define
    $$
    B_i=\Big\{(l_1,\dots, l_k) \in \NN^k \ : \   \  l_j =i \mbox{ for some } j\geq 2\Big\} \,.
    $$
Then $(A_i)_{i\geq 1} $ is a $k$--cover for 
$$
A_i = \bigcup_{\psi_j\not \in B_i} D_j \,.
$$
\end{lemma}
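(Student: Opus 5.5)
The proof is a direct unwinding of the definitions, working with complements. Since $(D_j)_{j\geq1}$ is a partition of $\NN$ and $A_i$ is the union of those $D_j$ with $\psi_j\notin B_i$, the complement is
$$
\NN\setminus A_i=\bigcup_{\psi_j\in B_i} D_j .
$$
Taking complements of finite unions therefore gives, for any indices $i_1,\dots,i_r$,
$$
\NN\setminus\big(A_{i_1}\cup\dots\cup A_{i_r}\big)=\bigcup_{\psi_j\in B_{i_1}\cap\dots\cap B_{i_r}} D_j .
$$
Both conditions of Definition~\ref{df:cover} thus reduce to statements about the sets $B_{i_1}\cap\dots\cap B_{i_r}\subseteq\NN^k$. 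Since $\psi$ is a bijection onto $\NN^k$, each tuple in such an intersection corresponds to exactly one index $j$.

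For condition (1), take distinct $i_1,\dots,i_k$. A tuple $(l_1,\dots,l_k)$ in $B_{i_1}\cap\dots\cap B_{i_k}$ would have to take each of the $k$ distinct values $i_1,\dots,i_k$ among its coordinates $l_2,\dots,l_k$. There are only $k-1$ such coordinates, so by pigeonhole no such tuple exists. The intersection is empty, hence the complement of $A_{i_1}\cup\dots\cup A_{i_k}$ is empty, which is (1).

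For condition (2), take distinct $i_1,\dots,i_{k-1}$ (this uses $k\geq 2$). For every $l\in\NN$ the tuple $(l,i_1,\dots,i_{k-1})$ lies in $B_{i_1}\cap\dots\cap B_{i_{k-1}}$, so this intersection is infinite. Its preimage under $\psi$ is therefore an infinite set of indices $j$, and the complement of $A_{i_1}\cup\dots\cup A_{i_{k-1}}$ contains the union of the corresponding infinitely many pairwise disjoint sets $D_j$. If the pieces of the partition are nonempty, as is implicit in the word ``partition'', this union is infinite, which is (2).

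There is no real obstacle here. The only point needing care is that nonemptiness of the $D_j$ (or at least of infinitely many of the relevant ones) is used in (2), and I would state that assumption explicitly.
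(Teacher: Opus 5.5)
Your proof is correct and follows essentially the same route as the paper: pigeonhole to get $B_{i_1}\cap\dots\cap B_{i_k}=\emptyset$ for condition (1), and infinitude of $B_{i_1}\cap\dots\cap B_{i_{k-1}}$ for condition (2). You spell out two points the paper leaves implicit: the complement identity, and the nonemptiness of the $D_j$. The latter holds in the paper's application, where $D_i=[n_i,n_{i+1})\cap\NN$ with $(n_i)$ strictly increasing.
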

\begin{proof}
  Let $i_1, \dots, i_k$ be distinct. It is simple to check that $B_{i_1} \cap \cdots \cap B_{i_k} = \emptyset$.  For every $x \in \NN$ there exists $j$ such that $x \in D_j$, thus  at least one of  $B_{i_1}, \dots,  B_{i_k}$ doesn't contain $\psi_j$, which implies $A_{i_1}\cup \dots \cup  A_{i_k} = \NN$. It also is straightforward to check that  $B= B_{i_1} \cap \dots \cap B_{i_{k-1}}$ is an infinite set and $\NN^k \not = B$. Thus the second property of $k$--cover holds for $(A_i)_{i\geq1}$.  Hence,  $(A_i)_{i\geq 1} $ is a $k$--cover.
\end{proof}
\begin{lemma}\label{lem:conditionalgap}
Let $p$ and $q$ be two infinitely supported probability measures on $\NN$. 
For every $k\geq 2$, there exists $k$--cover $(A_j)_{j\geq 1}$  such that  every $k-1$ convex combination of probability measures $\al_j=t_j p^{A_j}+(1-t_j) q$ has a long gap for $q$ (Definition \ref{df:gap}) for any $0<t_j<1$. 
\end{lemma}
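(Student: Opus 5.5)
The idea is to build the partition $(D_j)_{j\geq1}$ of $\NN$ by hand and then pass it through Lemma~\ref{lem:kcovering}. We arrange each $D_j$ as a union of infinitely many long consecutive blocks. Each block $[a_s,b_s)$ is chosen so that $q_{a_s}$ is large compared with the $p$-tail beyond the end $b_s$ of the block. The point is that the complement of a union of $k-1$ of the $A_j$'s contains a whole $D_j$, and hence infinitely many full blocks. For $m=a_s$ the beginning of such a block, the combined $p$-part has no mass on $[a_s,b_s)$, so its tail from $m$ is controlled by the $p$-tail beyond $b_s$.

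\emph{Construction.} Fix a map $s\mapsto j(s)$ from $\NN$ onto $\NN$ that takes every value infinitely often. Define integers $1=b_0\le a_1<b_1\le a_2<b_2<\cdots$ inductively as follows.
\begin{itemize}
\item Choose $a_s\geq b_{s-1}$ with $q_{a_s}>0$; this is possible since $q$ is infinitely supported.
\item Choose $b_s>a_s$ so large that $[a_s,b_s)$ contains a point of the support of $p$, and so that $\sum_{i\geq b_s}p_i\leq q_{a_s}/s$; this is possible since the tails of $p$ tend to $0$.
\end{itemize}
Put the interval $[b_{s-1},b_s)$, which contains the block $[a_s,b_s)$, into $D_{j(s)}$. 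This gives a partition $(D_j)$ of $\NN$ in which every $D_j$ contains infinitely many blocks $[a_s,b_s)$ and has positive $p$-measure. Let $(A_j)$ be the $k$--cover from Lemma~\ref{lem:kcovering}. Each $A_j$ is a nonempty union of $D$'s, so $p(A_j)>0$ and $p^{A_j}$ is well defined.

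\emph{Verification.} Take distinct $i_1,\dots,i_{k-1}$, weights $c_1,\dots,c_{k-1}>0$ with $\sum_l c_l=1$, and $0<t_{i_l}<1$. Then
$$\sum_l c_l\al_{i_l}=tp'+(1-t)q,\qquad t=\sum_l c_l t_{i_l}\in(0,1),\qquad p'=t^{-1}\sum_l c_l t_{i_l}p^{A_{i_l}}.$$
Here $p'$ is supported on $U=A_{i_1}\cup\dots\cup A_{i_{k-1}}$, and $p'_n\leq K p_n$ with $K=\max_l 1/p(A_{i_l})$. By property (2) of a $k$--cover, $\NN\setminus U$ is infinite. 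It is a union of $D$'s, since each $A_i$ is, so it contains some $D_j$. Hence it contains the blocks $[a_s,b_s)$ for the infinitely many $s$ with $j(s)=j$. For such $s$ and $m=a_s$, the measure $p'$ vanishes on $[a_s,b_s)$, so
$$t\sum_{i\geq m}p'_i\leq tK\sum_{i\geq b_s}p_i\leq \frac{tK}{s}\,q_{a_s}\leq (1-t)q_m$$
once $s\geq tK/(1-t)$. This holds for infinitely many $m$, which is exactly a long gap for $q$ in the sense of Definition~\ref{df:gap}. The same argument covers combinations of fewer than $k-1$ measures, since their union is contained in a $(k-1)$-fold union.

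\emph{Main obstacle.} The difficulty is that the partition must be fixed before the coefficients $c_l,t_{i_l}$ are known, and the constant $K$ depends on which $A$'s are used. A naive choice of $D_j$ fails when $p$ has much heavier tails than $q$, because then no single $m$ need satisfy the inequality. The block construction handles this with the factor $1/s$, which beats any fixed constant $K$. The gap $[a_s,b_s)$ lying inside the complement removes the slowly decaying part of the $p$-tail. Only the containment $p'\le Kp$ and property (2) of the cover are used.
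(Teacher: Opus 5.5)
Your proof is correct and follows essentially the paper's argument. The paper likewise picks $n_i$ with $q_{n_i}/n_i\ge\sum_{m\ge n_{i+1}}p_m$, takes $D_i=[n_i,n_{i+1})$, and feeds this partition into Lemma~\ref{lem:kcovering}. It then uses that the complement of any $(k-1)$-fold union contains infinitely many $D_i$, and at their left endpoints the factor $1/n_i$ eventually beats the constant $t_m/((1-t_m)p(A_m))$, just as your $1/s$ beats $tK/(1-t)$.

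Grouping infinitely many blocks into each $D_j$ is an unnecessary but harmless variation. Your choices $q_{a_s}>0$ and $p([a_s,b_s))>0$ make explicit two small points the paper handles only implicitly: its choice $n_1=1$ needs $q_1>0$, and it justifies $p(A_j)>0$ only by $A_j\neq\NN$.
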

\begin{proof}
Since $q$ is infinitely supported and $\sum_{m= i}^\infty p_m \to 0$ as $i\to \infty$, there exists a strictly increasing sequence $(n_i)_{i\geq 1}$ such that $n_1=1$ and  for every $i\geq 1$
$$
\frac{q_{n_i} }{n_i}  \geq   \sum_{m = n_{(i+1)}}^\infty p_m  \ .
$$
We  define $D_i = [n_i, n_{(i+1)}) \cap \NN$. 
We construct $k$--cover $(A_j)_{j\geq1}$ by applying Lemma~\ref{lem:kcovering} to the partition $(D_i)_{i\geq 1}$.  Let $\al$ be a $k-1$ convex combination of $\al_j$.  Without loss of generality, suppose that $\al = b_1 \al_1 + \dots + b_{k-1} \al_{k-1}$ where $b_1+\dots+b_{k-1} = 1$ and $b_i\geq0$.
We will show that $\al$ has a long gap  for $q$. Since $A_j \not = \NN$, we have  $0<p(A_j)<1$ for every $j\in \NN$.  For every $ 0<t_m<1$ and $m=1,\dots, k-1$, there exists $n_{m_0}$ (depending on $t_1,\dots, t_{k-1}$)  such that 
$$
\frac{(1-t_m)p(A_m)}{t_m}>\frac{1}{n_{m_0}}  \,.
$$ 
Let  $i>m_0 $ and $n_i \not\in (A_1 \cap \dots \cap A_{k-1})$ (which is an infinite set because $(A_j)_{j\geq 1}$ is a $k$--cover), thus $p^{A_m}(D_i)=0$ for infinitely many $i$ and for every $m=1,\dots,k-1$:
\begin{equation}\label{eq:kcover}
 (1-t_m) q_{n_i} \geq   \frac{q_{n_i}}{n_i}\frac{t_m}{p(A_m)}\geq  \frac{t_m}{p(A_m)} \sum_{k = n_{(i+1)}}^\infty p_k = t_m\sum_{k = n_i}^\infty p^{A_m}_k\,.
\end{equation}
Multiplying by $b_m$ both sides of inequality~(\ref{eq:kcover}) and adding them from $m=1$ to $k-1$ imply $\al$ has a long gap for $q$. 
\end{proof}

\begin{definition}[\bf Fit time]\label{df:fittime}
Let $(\xi_i,\zeta_i)_{i\geq 1}$  be a sequence of i.i.d. random variables according to $\al_{t,p,q}$ (Definition~\ref{df:alpha}). Suppose that $\Phi$ is an $\alpha_{t,p,q}$--ladder as in Lemma~\ref{lem:phi}. We say   $(\xi_i,\zeta_i)_{i\geq 1}$ has a \emph{fit time} at time $n$   for $m$ whenever the following are satisfied:
\begin{enumerate}
\item $n<\Phi_m$, 
\item $(\xi_n, \zeta_n)=(m, 0)$,
\item  $\xi_i<m$ for all $i<n$.
\end{enumerate}
\end{definition}
\begin{theorem}\label{them:longfit}
Let $(\xi_i,\zeta_i)_{i\geq 1}$ be a sequence of i.i.d. random variables  according to $\al_{t,p,q}$ (Definition~\ref{df:alpha}). If $\al_{t,p,q}$  has a long gap (Definition~\ref{df:gap}) for $q$, then $(\xi_i,\zeta_i)_{i\geq 1}$ has infinitely many fit times. 
\end{theorem}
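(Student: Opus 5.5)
The plan is to reduce fit times to record times, and to show that at infinitely many record times the new record is drawn from the $q$-part ($\zeta=0$) with conditional probability at least $1/2$. Let $\Phi$ be the $\al_{t,p,q}$--ladder of Lemma~\ref{lem:phi}. For $m\in\NN$ put $T_m=\min\{i:\xi_i\geq m\}$, which is a.s.\ finite since $\al_{t,p,q}$ is infinitely supported. The key observation is that a fit time for $m$ may be replaced by a fit time for the value actually reached. Consider the event
$$F_m=\{\zeta_{T_m}=0\}\cap\{T_m<\Phi_m\}.$$
On $F_m$, the time $n=T_m$ is a fit time for $m'=\xi_{T_m}\geq m$. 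Indeed, $\xi_i<m\leq m'$ for $i<n$ and $(\xi_n,\zeta_n)=(m',0)$. Moreover $n<\Phi_m\leq\Phi_{m'}$ because $\Phi$ is non-decreasing. Since $\xi_{T_m}\to\infty$, infinitely many of the events $F_m$ yield infinitely many distinct fit times. So it suffices to show that a.s.\ $F_m$ occurs for infinitely many $m$.

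\emph{Step 1: the time condition holds eventually.} The time $T_m$ is a record time $\ta_{j+1}$ whose preceding record value satisfies $\xi_{\ta_j}<m$. Lemma~\ref{lem:phi} gives a.s.\ $\ta_{j+1}\leq\Phi(\xi_{\ta_j})\leq\Phi_m$ for all large $j$, hence for all large $m$. The inequality here is non-strict while Definition~\ref{df:fittime} asks for $n<\Phi_m$. I expect this to be harmless: either Lemma~\ref{lem:phi}, via the construction of $\Phi$ in Lemma~\ref{lem:universal}, already gives strictness, or one replaces $\Phi$ by $\Phi+1$, which is still a ladder. I would check which convention the later sections need.

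\emph{Step 2: the $\zeta$ condition has probability at least $1/2$ at long-gap values.} The pair $(\xi_{T_m},\zeta_{T_m})$ is distributed as a single sample conditioned on $\{\xi\geq m\}$. Hence
$$P(\zeta_{T_m}=1)=\frac{t\,p[m,\infty)}{t\,p[m,\infty)+(1-t)\,q[m,\infty)}.$$
For each of the infinitely many $m$ in the long gap (Definition~\ref{df:gap}), we have $(1-t)q[m,\infty)\geq(1-t)q_m\geq t\,p[m,\infty)$. This gives $P(\zeta_{T_m}=0)\geq 1/2$. Combining with Step~1, $P(F_m)\geq 1/2-o(1)$ along these $m$. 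By the reverse Fatou lemma, $P(\limsup F_m)\geq\limsup P(F_m)\geq 1/2$.

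\emph{Step 3: upgrade to probability one.} This is the main obstacle, since the events $F_m$ are far from independent. I would use the Hewitt--Savage zero--one law. Let $E$ be the event that $(\xi_i,\zeta_i)_{i\geq1}$ has infinitely many fit times; I claim $E$ is invariant under finite permutations of the coordinates. Take a permutation $\sigma$ of $\{1,\dots,N\}$. Consider any fit time $n>N$ for a value $m>\max_{i\leq N}\xi_i$. Its three defining conditions depend only on $n$, on $m$, on the coordinate at time $n$, and on the set of values before $n$, and $\sigma$ does not change any of these. There are only finitely many fit times $n\leq N$, and only finitely many fit times for values $m\leq\max_{i\leq N}\xi_i$, since distinct fit times have distinct values. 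Hence $E$ is exchangeable, so $P(E)\in\{0,1\}$. Since $E\supseteq\limsup F_m$ and $P(\limsup F_m)\geq1/2$ by Step~2, we conclude $P(E)=1$.
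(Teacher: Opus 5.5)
Your proposal is correct and is essentially the paper's argument: reduce fit times to record times at which the new maximum comes from the $q$-part, use the long gap to get probability at least $1/2$ for infinitely many $m$, and upgrade $P(\limsup)\geq 1/2$ to probability one by a zero--one law. Your first-passage event $\{\zeta_{T_m}=0\}$ and your use of Hewitt--Savage are only cosmetic variants of the paper's events $E_m$ and its tail-event argument, and your exact computation of $P(\zeta_{T_m}=0)$ is, if anything, tidier than the paper's bookkeeping with $E_{n,m}$.

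For the strict inequality, put $T_m<\Phi(\xi_{T_m})$ into $F_m$ instead of $T_m<\Phi_m$. With $\Phi$ obtained from $\varphi$ via $\{(i,j):\varphi_i<j\}=\{(i,j):i<\Phi_j\}$ as in Lemma~\ref{lem:universal}, the eventual bound $\varphi_{T_m}<M_{T_m}=\xi_{T_m}$ gives exactly this, and it is what the paper uses when it writes $\tau_n<\Phi(\xi_{\tau_n})$. Replacing $\Phi$ by $\Phi+1$ is not a good fix, because it changes the notion of fit time.
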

\begin{proof}
Since $\al_{t,p,q}$ is infinitely supported, for almost every $(\xi_i,\zeta_i)_{i\geq 1}$  there is a strictly increasing sequence 
of recording times $(\tau_n)_{n\geq 1}$ that almost surely satisfies    $\tau_n<\Phi(\xi_{\tau_n})$ for sufficiently  large enough $n$ by Lemma~\ref{lem:phi}.  Also, the record values eventually converge to infinity as $\al_{t,p,q}$ is infinitely supported. Therefore, it is sufficient to show that 
when $\al_{t,p,q}$ has a long gap for $q$, then for almost every $(\xi_i,\zeta_i)_{i\geq 1}$  there are infinitely many record times $\tau_n$  such that $\zeta_{\tau_n}=0$. Therefore, it is enough to show that $E_m$, the event consisting of all $(\xi_i,\zeta_i)_{i\geq1}$ such that $(m,0)$ appears before $\{(m,1), (m+1,1),\dots\}$ up to a finite time, happens  for infinitely many $m$ with probability one.  We have
\begin{equation}\label{eq:Em}
E_m = \bigcup_{n=1}^\infty E_{n,m},
 \end{equation} 
where $E_{n,m}$  is  the set of all  $ (\xi_i,\zeta_i)_{i\geq 1}$ that  their  $n$-th increment is $(m,0)$ and the first $n-1$ increments are not in  $\{(j,1) :  \ j \geq m \}$: 
$$
E_{n,m} = \Big\{ (\xi_i,\zeta_i)_{i\geq 1}\ :\ (\xi_n, \zeta_n)=(m,0),   \forall i<n\  \xi_i<m \Big\}.
$$ 

Because $\al_{t,p,q}$ has a long gap for $q$, there are infinitely many $m$ such that for every $i\geq 1$
\begin{equation}\label{eq:manygap}
P(\xi_i=m, \zeta_i=0) \geq \sum_{n=m}^\infty P(\xi_i=n,\zeta_i=1).
\end{equation}
 Hence, $\al_{t,p,q}$ having a long gap for $q$ implies that $m$ satisfies (\ref{eq:manygap}) the probability of an increment of $(m,0)$ shows up before an increment from  $\{(j,1) :  \ j \geq m \}$ is higher than showing up after increments from  $\{(j,1) :  \ j \geq m \}$. Thus,
$P(E_m) \geq 1- P(E_m)$ and $P(E_m) \geq 1/2$ for infinitely many $m$. Therefore, 
$$
P(\limsup_{m\to \infty} E_m) \geq \frac12 \,.
$$ 
 Note that $\displaystyle\limsup_{m\to \infty} E_m$ is a tail event with positive probability, thus the desired result follows from a zero-one lemma .  
\end{proof}

\section{Switching and F{\o}lner Sets}\label{sec:FS}
In this section, we first remind readers about the definitions of   switching and F{\o}lner sets.     
\begin{definition}[\bf   Switching  Sets]
Let $B$ be a non-empty subset of $G$ and $g$ in $G$. We say $g$ is a \emph{  switching} element for $B$ when 
for $i,j \in \{1,-1\}$
$$
g^i B g^j \cap B \subseteq \{e\}.
$$
When $S$ is a non-empty subset of $G$, we write that $S$ is \emph{switching} for $B$ when $g$ is   switching  for every $g\in S$. 
\end{definition}
\begin{remark}\label{re:switchin} Note that our definition of the switching element is equivalent to the \emph{super switching} elements in \cite{Josh19}. We also note that by \cite[Proposition 2.5]{Josh19}  every non-empty symmetric finite subset of a countable amenable ICC group has infinitely many   switching elements. The same result is proved by
\cite[Proposition 4.25]{Erschler-Kaimanovich}  for all countable ICC groups.
\end{remark}

\begin{definition}[\bf F{\o}lner property] Let $K$ and $F$ be finite subsets of $G$. For $\ep>0$, we say  $F$ is $\ep$--F{\o}lner for $K$ if
$$
|gF \Delta F| < \ep |F|   
$$ 
for all $g \in K$.
\end{definition}
\begin{remark}\label{re:folner}
Note that $G$ is amenable if and only if for every $\ep>0$ and  finite subset $K$ of $G$, there exists an $\ep$--F{\o}lner set for $K$. Moreover,  by  \cite[Theorem 5.2]{Namioka}, one can choose $F$ to be symmetric and include any given finite subset $A$ of $G$, see also \cite[Theorem 1"]{Emerson}. 
\end{remark}
\begin{definition}[\bf F{\o}lner--switching sets] 
Let $(F_n)_{n\geq 1}$ and $(S_n)_{\geq 1}$ be sequences of subsets in $G$ such that
$S_n=\{s_n,s_n^{-1}\}$ and $e\in F_1$. Suppose that   $\Phi:\NN \to \NN$ is non-decreasing. We say $(F_n,S_n,\Phi_n)_{n\geq 1}$ is \emph{F{\o}lner--switching} when  
\begin{enumerate}
\item $F_n$ is symmetric,
\item $(F_n)_{n\geq 1}$ is pairwise disjoint,
\item  $F_n$ is  $\frac{1}{n}$--F{\o}lner for $\Big(S_1\cup \cdots \cup S_{n-1} \cup F_1 \cup \cdots \cup F_{n-1} \Big)^{5\Phi_{n} }$,
\item $S_{n}$ is   switching for $\Big(S_1 \cup \cdots \cup S_{n-1} \cup F_1\cup \cdots\cup F_{n}\Big)^{5\Phi_{n}}$,
\item $G=\displaystyle\bigcup_{n=1}^\infty F_n$.
\end{enumerate}
\end{definition}
Note that since $e\in F_1$ and $\Phi$ is non-decreasing, property (4) and definition of  switching elements imply that $(S_n)_{n\geq 1}$ is pairwise disjoint, and also  $F_n$ and $S_n$ are pairwise disjoint.
\begin{lemma}\label{lem:folner-switch}
Let $G$ be a countable amenable ICC group. If $\Phi:\NN \to \NN$ is non-decreasing, then there exists F{\o}lner--switching  $(F_n,S_n,\Phi_n)_{n\geq 1}$.  
\end{lemma}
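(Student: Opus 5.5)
I will build the sequence inductively, alternating a F{\o}lner step and a switching step. First fix an enumeration $G=\{g_1,g_2,\dots\}$ with $g_1=e$; interleaving this enumeration is what will give property (5). Also write
$$
K_n=\Big(S_1\cup\cdots\cup S_{n-1}\cup F_1\cup\cdots\cup F_{n-1}\Big)^{5\Phi_n},
$$
a finite set because each $F_j$ and $S_j$ is finite and $\Phi_n<\infty$.

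\textbf{Step 1 (choose $F_n$).} Suppose $F_1,\dots,F_{n-1}$ and $S_1,\dots,S_{n-1}$ have been chosen. By Remark~\ref{re:folner}, there is a finite symmetric set $F_n'$ that is $\frac{1}{2n}$--F{\o}lner for $K_n$ and contains any prescribed finite set. For $n=1$ I require $e\in F_1$. Then I remove the earlier pieces and set
$$
F_n=F_n'\setminus (F_1\cup\dots\cup F_{n-1}),
$$
which is symmetric because each earlier $F_j$ is symmetric. To keep the F{\o}lner property after this removal, I choose $F_n'$ large: since $F_1,\dots,F_{n-1}$ are fixed finite sets, I take $|F_n'|$ so big that $|F_1\cup\dots\cup F_{n-1}|\le \frac{1}{8n}|F_n'|$. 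Removing a set of size $r$ changes $|gF\Delta F|$ by at most $2r$ and $|F|$ by at most $r$. So $F_n$ is $\frac1n$--F{\o}lner for $K_n$. This choice is possible because F{\o}lner sets for a nontrivial infinite group can be taken arbitrarily large. For example, replace $F$ by a union of far-apart translates $Fx_1\cup\dots\cup Fx_N$, which stays F{\o}lner for left multiplication, and then symmetrize via Namioka's result. I also make $F_n'$ contain $g_n$. Then $g_n$ lies either in $F_n$ or in an earlier $F_j$, which gives (5) and pairwise disjointness (2).

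\textbf{Step 2 (choose $S_n$).} Let
$$
B_n=\Big(S_1\cup\cdots\cup S_{n-1}\cup F_1\cup\cdots\cup F_n\Big)^{5\Phi_n}.
$$
This is a finite set, and it is symmetric because each generating piece is symmetric. By Remark~\ref{re:switchin} (\cite[Proposition 2.5]{Josh19}), since $G$ is a countable amenable ICC group, $B_n$ has infinitely many switching elements. Pick one and call it $s_n$. The defining condition is symmetric under $g\mapsto g^{-1}$ because $i,j$ range over $\{\pm1\}$, so $s_n^{-1}$ is also switching for $B_n$, and $S_n=\{s_n,s_n^{-1}\}$ satisfies (4). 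Property (3) holds by Step 1. Induction then completes the construction.

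\textbf{Main obstacle.} The delicate point is reconciling disjointness with the F{\o}lner property and symmetry in Step 1, which the large-size trick above handles. Switching elements come directly from the cited proposition.
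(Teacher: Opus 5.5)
Your proposal is correct and follows the same inductive route as the paper. At stage $n$ you choose a symmetric F\o lner set for the earlier pieces raised to the power $5\Phi_n$, arranged to be disjoint from the previous $F_j$ and to capture a new enumerated element, and you then take $s_n$ switching for the resulting finite symmetric set via Remark~\ref{re:switchin}. Your removal-plus-size argument makes explicit the disjointness that the paper only asserts, and the largeness of $F_n'$ you need already follows from the ``contains any prescribed finite set'' clause of Remark~\ref{re:folner}, so the detour through translates and symmetrization is unnecessary.
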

\begin{proof}
The proof is an immediate consequence of amenability and existence of   switching elements in ICC groups.  Let us denote by $\ep_n=\frac1n$ and 
$G=\{a_i, a_i^{-1} :\ i\geq 1\}$ such that $a_1=e$ is the identity element of $G$. Suppose that $s_1\not = e$. We define $F_1=\{e\}$ and  $S_1=\{s_1, s_1^{-1}\}$. It is clear that $F_1$ and $S_1$ satisfy   properties (1)-(5). 
By induction, suppose that $(F_i,S_i)_{1 \leq i<n}$ are constructed that satisfy the desired properties (1)--(4). Let $K_i=S_1\cup\cdots\cup S_{i} \cup F_1\cdots \cup F_{i}$. Since $G$ is amenable, by Remark~\ref{re:folner} there exists a symmetric finite set $F_n$ which is $\ep_n$--F{\o}lner for $(K_{n-1})^{5\Phi_{n}}$, moreover, $F_n$ can be chosen that $F_n \cap (F_1\cup\dots \cup F_{n-1})=\emptyset$  and $a_{i_n} \in F_n$, where $i_n = \min\{ i\ :\ a_i \not \in F_1\cup\dots \cup F_{n-1} \}$.
Since $G$ is an ICC group, by Remark~\ref{re:switchin} $(K_{n-1} \cup F_n)^{5\Phi_n}$ has infinitely many switching elements. Thus, 
there exists $S_n=\{s_{n},s^{-1}_{n}\}$ which is switching for $(K_{n-1} \cup F_n)^{5\Phi_n}$. Thus, the desired result is obtained by induction.
\end{proof}
\subsection{Construction of non-Liouville measures}\label{sec:construction}
We begin by recalling the following result, which will be used to construct non-Liouville measures on an ICC group.
\begin{theorem}{\cite[Theorem 3.10]{Erschler-Kaimanovich}}\label{thm:erchler-kaimanovich}
Let $G$ be a countable ICC group and $\mu$ a non-degenerate probability measure on $G$.  Assume that $\alpha$ is an infinitely supported probability measure on $\NN$ and $\Phi$ is an $\al$--ladder.
Suppose that $(G_n)_{n\geq 1}$ and $(S_n)_{n\geq 1}$ are  sequences of finite sets in $G$ such that $G=\cup_{n=1}^\infty G_n$ and $e\in G_1$ and $S_n$ is switching for $(S_1 \cup \dots \cup S_{n-1} \cup G_1 \cup \dots \cup G_n)^{5\Phi_n}$.  Let $\mu(G_n \cup S_n) = \al_n$ for every $n\geq 1$ and
\begin{equation}\label{eq:simple1}
\sum_{n=1}^\infty\Big(\frac{\al_n}{\al_n+\al_{n+1}+\cdots}\Big)^2 <\infty \,, \hspace{1cm}
\sum_{n=1}^\infty \frac{\mu(G_n)}{\al_n} < \infty \,.
\end{equation}
Then, the probability measure $\mu$ is non-Liouville. 
\end{theorem}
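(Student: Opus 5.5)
The plan is to use the switching elements as ``coins'' whose signs are recorded permanently in the trajectory of the random walk. Concretely, we will exhibit a tail-measurable event of non-trivial probability. Write the increments of the $\mu$--random walk as $h_1,h_2,\dots$ and attach to each $h_i$ its level $\xi_i$, namely the $n$ with $h_i\in G_n\cup S_n$, taking the smallest such $n$ if there is ambiguity. Then $(\xi_i)$ are i.i.d.\ with law $\al$, and we can speak of record times $\tau_k$ and record values $\xi_{\tau_k}$ as in Section~\ref{sec:record}.

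\emph{Step 1 (records are clean).} We show that almost surely, for all large $k$, three things hold:
\begin{enumerate}
\item the record increment $h_{\tau_k}$ lies in $S_{\xi_{\tau_k}}$ rather than in $G_{\xi_{\tau_k}}$;
\item the record level $\xi_{\tau_k}$ is not hit again before the next record;
\item $\tau_{k+1}\le \Phi(\xi_{\tau_k})$.
\end{enumerate}
For (1), the probability that the first visit to level $n$ comes from $G_n$ is $\mu(G_n)/\al_n$. This is summable by the second condition in \eqref{eq:simple1}, so Borel--Cantelli applies. For (2), the probability that level $n$ is visited twice before any higher level is of order $\big(\al_n/(\al_n+\al_{n+1}+\cdots)\big)^2$. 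This is summable by the first condition in \eqref{eq:simple1}. Item (3) is the defining property of the $\al$--ladder $\Phi$ from Lemma~\ref{lem:phi}.

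\emph{Step 2 (the sign is readable from later positions).} Fix a large record $k$ with level $n=\xi_{\tau_k}$, and write $h_{\tau_k}=s_n^{\epsilon_k}$ with $\epsilon_k\in\{\pm1\}$. For $\tau_k\le m<\tau_{k+1}$, Step~1 gives
$$
X_m = A\, s_n^{\epsilon_k}\, B,
$$
where $A$ and $B$ are words of length at most $\Phi_n$ in $S_1\cup\dots\cup S_{n-1}\cup G_1\cup\dots\cup G_n$. By the switching property, $s_n^{i}Ks_n^{j}\cap K\subseteq\{e\}$ with $K=(S_1\cup\cdots\cup S_{n-1}\cup G_1\cup\cdots\cup G_n)^{5\Phi_n}$, which separates the two signs. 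If $A s_n B = A' s_n^{-1} B'$ with such words, then $s_n^{-1}(A'^{-1}A)s_n^{-1}$ would be a non-identity element of $K$ lying in $s_n^{-1}Ks_n^{-1}$, a contradiction. So $\epsilon_k$ is a function of $X_m$.

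The real work is to show that $\epsilon_k$ remains recoverable from $X_m$ for \emph{all} later $m$, including times after subsequent records at higher levels. The idea is induction along records. The next record $s_{n'}^{\epsilon_{k+1}}$ is switching for a $5\Phi_{n'}$-ball that contains the whole prefix $X_{\tau_{k+1}-1}$ as a word, because $\tau_{k+1}\le\Phi_n\le\Phi_{n'}$. This lets us peel off the higher-level letters and recover the prefix up to bounded ambiguity, and the exponent $5$ in $5\Phi_n$ is what absorbs the concatenations of prefixes and suffixes that arise in this peeling. The main obstacle, where I would spend most of the effort, is to set up this decoding so that the errors from finitely many non-clean records do not propagate.

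\emph{Step 3 (a non-constant harmonic function).} Given Steps~1 and~2, fix a level $L$ and let $\epsilon^{(L)}$ be the sign of the first clean record of level greater than $L$. By Step~2 it is a measurable function of $\lim X_m$, meaning it is determined by $X_m$ for every large $m$, so it is tail (indeed shift-invariant) measurable. By the symmetry of $S_n=\{s_n,s_n^{-1}\}$, each sign is a fresh choice independent of the past, with both values having positive probability. Hence the function
$$
f(g)=P_g\big(\epsilon^{(L)}=+1\big)
$$
is a bounded $\mu$--harmonic function. It is non-constant, since conditioning on the first steps can force the sign. Therefore $\mu$ is non-Liouville.
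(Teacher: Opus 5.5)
The paper does not prove this statement; it imports it from Erschler--Kaimanovich. Your argument therefore has to stand on its own, and at present it has a genuine gap. Step~1 is fine: Borel--Cantelli on the two series in the hypothesis, plus the ladder property. The decoding of the sign inside a single inter-record interval is also fine, provided $s_n^2\neq e$.

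What you call ``the real work'' in Step~2 is the entire content of the theorem, and you only gesture at it. A bounded harmonic function needs a decoding that depends on the group element alone, not on the time $m$ or the increment sequence, and whose output along the trajectory eventually stops changing. Switching can supply this. For symmetric $\Sigma_n=S_1\cup\dots\cup S_{n-1}\cup G_1\cup\dots\cup G_n$, an element $g$ has at most one decomposition $g=AsB$ with $s\in S_n$ and $A,B\in\Sigma_n^{\Phi_n}$, simultaneously over all $n$. (Two decompositions at levels $n<n'$ would put $s_{n'}$ into $\Sigma_{n'}^{5\Phi_{n'}}$, which switching forbids.) The parent map $g\mapsto A$ then sends $X_m$, for $\tau_k\le m<\tau_{k+1}$, to $X_{\tau_k-1}$ for all large $k$. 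So the prefixes $X_{\tau_k-1}$ form a ray in a forest on $G$ that is read off from positions only; this is the kind of arboreal structure the cited proof rests on. Your Step~2 contains neither such a global decoding map nor a proof that its output stabilizes, and ``peel off the higher-level letters up to bounded ambiguity'' does not replace them.

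Step~3 has two further defects.

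\emph{The event is not shift-invariant.} $\epsilon^{(L)}$ is defined through the records of the increment sequence, whose law does not depend on the starting point, so $g\mapsto P_g(\epsilon^{(L)}=+1)$ is literally constant. Harmonicity, and the claim that ``conditioning on the first steps forces the sign,'' both need $\{\epsilon^{(L)}=+1\}$ to be invariant under the time shift. It is not: if $h_1$ is a clean record above $L$, then $\epsilon^{(L)}$ is its sign, while for $(h_2,h_3,\dots)$ the first clean record above $L$ is a different, independent increment. Tail measurability is not clear either. A clean record above $L$ occurring before the regime of Step~1 may be followed by a record drawn from some $G_{n'}$, after which nothing guarantees its sign is still readable.

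\emph{The sign need not be random.} The theorem assumes neither symmetry of $\mu$ nor $S_n=\{s_n,s_n^{-1}\}$. Even in the paper's applications, where $\mu|_{S_n}$ is proportional to $\U(S_n)$, the switching elements may be involutions, since the definition allows $s^2=e$. For example, in the group of finitary permutations of $\NN$, an involution exchanging $\{1,\dots,N\}$ with $\{N+1,\dots,2N\}$ is switching for every set of permutations supported in $\{1,\dots,N\}$. Then $S_n$ is a singleton and every $\epsilon_k$ is deterministic. The non-trivial invariant information has to be carried by positions, for instance the random low-level prefixes between clean records, and that brings you back to the missing decoding.
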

\begin{remark}
 The first condition in (\ref{eq:simple1}) ensures that the probability measure 
$\al$ on $\NN$
eventually admits simple records—that is, for sufficiently large record values, each occurs exactly once. The second condition ensures that these simple records eventually arise from switching sets. This is the key point at which the measure becomes non-Liouville in this construction, see \cite{Erschler-Kaimanovich} for more details.
\end{remark}

We use the construction in Theorem~\ref{thm:erchler-kaimanovich} for two different cases. In the first case, we are interested in using F{\o}lner--switching sets, see Section~\ref{sec:liouville}.   In the second case, we are interested in finite entropy measures, see Section~\ref{sec:entropy}

We denote the cardinality of $F$ by $|F|$. Suppose that $F$ is a finite set, denote by $\U(F)$ the uniform probability measure on $F$:
$$
\U(F)(g)= \begin{cases}
 \frac{1}{|F|} &  g\in F\,,\\
 0 & \mbox{ otherwise}\,.
\end{cases} 
$$

\begin{definition}\label{df:mu2}
    Let  $p$ and $q$ be two infinitely supported probability measures on $\NN$ such that $q$ is fully supported on $\NN$. Suppose that $\Phi$ is a $q$--universal ladder as in Lemma~\ref{lem:universal}. Let $G=\{a_i,  a_i^{-1}: i \in \NN\}$  such that $a_1=e$. Define $G_n=\{a_n,a_n^{-1}\}$. Suppose that $S_n=\{s_n,s_n^{-1}\}$ is switching for 
$(S_1\cup \cdots \cup S_{n-1}\cup G_1\cup\cdots \cup G_n)^{5\Phi_n}$. Let  $\U(G_n)$ and $\U(S_n)$ be uniform probability measures on $G_n$ and $S_n$, respectively. For $0<t<1$, we define the probability measure 
\begin{equation}\label{eq:lambdaconstruction}
\mu^{t,p,q}= t \sum_{n=1}^\infty p_n \U(S_n) + (1-t) \sum_{n=1}^\infty q_n\U(G_n) \,.
\end{equation}
\end{definition}
Note  $\mu^{t,p,q}$ is symmetric and fully supported. When $p$ and $q$ both have finite entropy, then $\mu^{t,p,q}$ has finite entropy. 

\begin{definition}\label{df:mu}
Let $q$ be a fully supported probability measure on $\NN$ and $\Phi$ be a universal $q$--ladder as in Lemma~\ref{lem:universal}. Let $(F_n,S_n,\Phi_n)_{n\geq 1}$ be a  F{\o}lner--switching, which exists by Lemma~\ref{lem:folner-switch}. Let $p$ be an infinitely supported probability measure on $\NN$. 
For $0<t <1$, we define the  probability measure 
\begin{equation}
\mu_{t,p,q} =t \sum_{n=1}^\infty p_n \U(S_n) +  (1-t) \sum_{n=1}^\infty q_n \U(F_n).
\end{equation}
\end{definition}
It is clear that $\mu_{t,p,q}$ is symmetric. Since $G=\cup_{n=1}^\infty F_n$ and $q$ is fully supported on $\NN$, the probability measure $\mu_{t,p,q}$ is fully supported. Note that by construction of F{\o}lner--switching sets, each $S_n$ has at most two elements. But, we cannot control the number of elements in $F_n$. Thus, the entropy of $\mu_{t,p,q}$ cannot be controlled and might be infinite.

Note that in both constructions above, the ladder depends on a measure. However, one can replace the measure with a class of measures that share the same ladder. 
\begin{lemma}\label{lem:nont}
Suppose that $p$ and $q$ are infinitely supported probability measures on $\NN$ such that
\begin{equation}\label{eq:pq}
 \sum_{n=1}^\infty\Big(\frac{p_n+q_n}{p_n+q_n+p_{n+1}+q_{n+1}+\cdots}\Big)^2 <\infty\,,  \hspace{1cm} 
\sum_{n=1}^\infty \frac{q_n}{q_n+p_n} < \infty \,. 
\end{equation}
If $\tilde{p}$ and $\tilde{q}$ are probability measures on $\NN$ such that $d(p+q) \leq \tilde{p} +\tilde{q}  \leq d' (p+q)$ for some $ 0<d \leq 1$ and $d'>0$, then  $\mu^{t,\tilde{p},\tilde{q}}$ (Definition~\ref{df:mu2}) and $\mu_{t,\tilde{p},\tilde{q}}$ (Definition~\ref{df:mu}) both are  non-Liouville for every $0<t < 1$.
\end{lemma}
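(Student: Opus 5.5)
The plan is to derive both claims from Theorem~\ref{thm:erchler-kaimanovich}. For $\mu^{t,\tilde p,\tilde q}$ I take the sets $G_n=\{a_n,a_n^{-1}\}$ of Definition~\ref{df:mu2}. For $\mu_{t,\tilde p,\tilde q}$ I take the F{\o}lner sets $F_n$ of Definition~\ref{df:mu}. In both cases the switching sets are $S_n$, and the switching hypothesis of Theorem~\ref{thm:erchler-kaimanovich} is built into the construction. The F{\o}lner condition plays no role in proving non-Liouville. In both cases
\[
\al_n:=\mu(G_n\cup S_n)=t\tilde p_n+(1-t)\tilde q_n ,
\]
with $F_n$ in place of $G_n$ in the second case. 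The sets $G_n$ and $S_n$ (resp.\ $F_n$ and $S_n$) are disjoint by the switching property and because $e\in G_1$. Setting $c=\min\{t,1-t\}$, I get $c(\tilde p_n+\tilde q_n)\le \al_n\le \tilde p_n+\tilde q_n$. Combined with the hypothesis this gives
\[
cd\,(p_n+q_n)\le \al_n\le d'(p_n+q_n).
\]
This two-sided comparison drives every step.

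\textbf{Step 1: the ladder.} The sets are switching for powers $5\Phi_n$, where $\Phi$ is a universal $q$--ladder. So I must check that $\Phi$ is also an $\al$--ladder for $\al=(\al_n)$. Since $\al_n\ge cd\,(p_n+q_n)\ge cd\,q_n$, Lemma~\ref{lem:universal} applies with $\bar\al=q$ and constant $cd$. Hence $\Phi$ is an $\al$--ladder. Moreover $\al$ is infinitely supported, because $q$ is fully supported.

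\textbf{Step 2: the first condition in (\ref{eq:simple1}).} For the tails I use
\[
\sum_{m\ge n}\al_m\ \ge\ cd\sum_{m\ge n}(p_m+q_m).
\]
Then
\[
\frac{\al_n}{\sum_{m\ge n}\al_m}\ \le\ \frac{d'}{cd}\cdot\frac{p_n+q_n}{\sum_{m\ge n}(p_m+q_m)} .
\]
Squaring and summing, the first condition in (\ref{eq:pq}) gives the first condition in (\ref{eq:simple1}).

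\textbf{Step 3: the second condition in (\ref{eq:simple1}).} Here $\mu(G_n)=(1-t)\tilde q_n$ (and likewise $\mu(F_n)=(1-t)\tilde q_n$). I need
\[
\sum_n \frac{(1-t)\tilde q_n}{t\tilde p_n+(1-t)\tilde q_n}<\infty .
\]
This is the main obstacle. The denominator is handled by $\al_n\ge cd\,(p_n+q_n)$. That reduces the problem to bounding $\tilde q_n$ by a constant multiple of $q_n$, at least for all large $n$. Then the term is $\lesssim q_n/(p_n+q_n)$, which is summable by the second condition in (\ref{eq:pq}).

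The hypothesis only compares the sums $\tilde p+\tilde q$ and $p+q$. So the first thing I would try is to extract this bound from the way $\tilde p,\tilde q$ arise in the paper's applications. There one of them is $q$ (or a fixed multiple of it) and the other is a convex combination of conditioned copies $p^{A_j}$. The comparison then pins down the $\tilde q$-part coordinatewise. I would verify this carefully, since it is exactly where the comparison of the sums must be turned into control of the $q$-coordinate.

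Once Steps 1--3 are in place, Theorem~\ref{thm:erchler-kaimanovich} yields that $\mu^{t,\tilde p,\tilde q}$ and $\mu_{t,\tilde p,\tilde q}$ are non-Liouville for every $0<t<1$. The argument is identical for the two measures, apart from replacing $G_n$ by $F_n$.
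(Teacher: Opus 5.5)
Your Steps 1 and 2 are correct and follow the paper's route: check the hypotheses of Theorem~\ref{thm:erchler-kaimanovich}, get the ladder from Lemma~\ref{lem:universal}, and transfer the first condition of (\ref{eq:pq}) through the two-sided comparison. Your constant $\min\{t,1-t\}$ is more careful than the paper's inequality $(1-t)dq\le t\tilde p+(1-t)\tilde q$, which as written only follows from the hypothesis when $t\ge 1/2$.

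The genuine gap is Step 3, which you leave open. It cannot be closed under the stated hypothesis, because $d(p+q)\le\tilde p+\tilde q\le d'(p+q)$ gives no control of $\tilde q$ by $q$. Take $\tilde p=q$ and $\tilde q=p$. Then $\tilde p+\tilde q=p+q$, yet
\[
\sum_n\frac{(1-t)\tilde q_n}{t\tilde p_n+(1-t)\tilde q_n}=\sum_n\frac{(1-t)p_n}{tq_n+(1-t)p_n}=\infty ,
\]
since $q_n/p_n\to0$ by the second condition in (\ref{eq:pq}). So the second condition in (\ref{eq:simple1}) fails and Theorem~\ref{thm:erchler-kaimanovich} does not apply.

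The conclusion itself can also fail. Take $p_n\asymp n^{-2}$ and $q_n\asymp n^{-4}$, which satisfy (\ref{eq:pq}). Then $t\sum_{i\ge m}q_i\le(1-t)p_m$ for all large $m$, so $\al_{t,q,p}$ has a long gap for $p$. Since $\al_{t,q,p}\ge tq$, the universal $q$--ladder is an $\al_{t,q,p}$--ladder, and the paper's own Corollary~\ref{cor:gaptri} makes $\mu_{t,q,p}$ Liouville. The paper's proof has the same hole: its claim that (\ref{eq:pq}) ``also holds for $\tilde p$ and $\tilde q$'' is true only for the first condition.

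What is missing is an extra, coordinatewise hypothesis on the $q$--component, e.g.\ $\tilde q\le Cq$. With it, your Step 3 is the one-line bound
\[
\frac{(1-t)\tilde q_n}{t\tilde p_n+(1-t)\tilde q_n}\le\frac{C}{\min\{t,1-t\}\,d}\cdot\frac{q_n}{p_n+q_n},
\]
and your proof then coincides with the paper's. You should add this hypothesis to the statement rather than try to extract it from the sum comparison, which never ``pins down'' $\tilde q$.

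The hypothesis does hold in both places the lemma is used, because of the explicit form of the measures:
\begin{itemize}
\item In Theorem~\ref{thm:main1}, $\tilde q=q$ exactly.
\item In Theorem~\ref{thm:entropy}, the non-switching weights of $\nu$ are $(1-t)q_n$ multiplied by coefficients in $[\tilde b/2,2]$. This is not a fixed multiple of $q$, as you suggest, but it is coordinatewise comparable to $q$, which is all that is needed.
\end{itemize}
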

\begin{proof}
By definitions of $\mu^{t,p,q}$ and $\mu_{t,p,q}$, the ladder $\Phi$ is a $q$--universal ladder. Since $(1-t)dq \leq (1-t)\tilde{q}+t\tilde{p}$, by Lemma~\ref{lem:universal} $\Phi$ also is a $t\tilde{p} +(1-t)\tilde{q}$--ladder. Now,  $d(p+q) \leq \tilde{p} +\tilde{q}  \leq d' (p+q)$ implies that properties (\ref{eq:pq})  also hold for $\tilde p$ and $\tilde{q}$.  Thus, for every $0<t < 1$, all conditions in 
Theorem~\ref{thm:erchler-kaimanovich} are satisfied. Therefore, $\mu_{t,\tilde p,\tilde{q}}$ and $\mu^{t,\tilde p,\tilde{q}}$ are non-Liouville.
\end{proof}

\section{Liouville measures for ICC Amenable Groups}\label{sec:liouville} 
We will show that, depending on how weights are assigned to F{\o}lner sets or switching sets, the probability measure $\mu_{t,p,q}$ (Definition~\ref{df:mu}) might become Liouville, particularly, this is the case when $\mu_{t,q,p}$ is not fully supported on all switching sets.  For this purpose, it is easier to  study $\mu_{t,p,q}$  in the following way. Let $(\xi_i,\zeta_i)_{i\geq 1}$ be a sequence of i.i.d. random variables  on $\NN\times \{0,1\}$ according to $\al_{t,p,q}$ (Definition~\ref{df:alpha}). 
We select  $h_i$ uniformly from $F_k$  when $(\xi_i,\zeta_i)=(k,0)$, and uniformly from $S_k$ when $(\xi_i,\zeta_i)=(k,1)$.  
It is clear that $(h_i)_{i\geq 1}$ is a sequence of  i.i.d. random variables  where $h_i$ is distributed according to $\mu_{t,p,q}$. This construction allows us to transfer properties of fit times (Definition~\ref{df:fittime}). So, we say $(h_i)_{i\geq 1}$ has a fit time at time $n$ for $m$, when the corresponding $(\xi_i,\zeta_i)_{i\geq 1}$ has a fit time at time $n$ for $m$.

When $\theta$ is a finite measure on a group $G$, we denote by $\theta^{*n}$ the $n$-th fold convolution of $\theta$ with itself. We denote by $\|\theta\|  = \sum_{g\in G} \theta(g)$ the $\ell^1(G)$ norm of $\theta$. 
\begin{theorem}[\bf infinitely many fit times $\to$ Liouville property]\label{thm:trivial}
Suppose that $\al_{t,p,q}$ (Definition \ref{df:alpha})  has infinitely many fit times. Then, the associated  $\mu=\mu_{t,p,q}$ (Definition~\ref{df:mu}) is Liouville. 
\end{theorem}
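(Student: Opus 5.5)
We prove the Liouville property through the Kaimanovich--Vershik $\ell^1$-criterion. The measure $\mu=\mu_{t,p,q}$ is Liouville if and only if
\[
\|\mu^{*n}-g\mu^{*n}\|\to 0 \quad\text{as } n\to\infty \text{ for every } g\in G .
\]
Equivalently, it suffices to show
\[
\|\mu^{*n}-\mu^{*(n+1)}\|\to 0,
\]
or a version of it along a dense set of times combined with monotonicity. Here we use that $n\mapsto \|g\mu^{*n}-\mu^{*n}\|$ is non-increasing. So it is enough to produce, for each fixed $g$ and each $\ep>0$, a single time $N$ with $\|g\mu^{*N}-\mu^{*N}\|<\ep$.

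\textbf{Setup and decomposition.} We realise $\mu^{*n}$ as the law of $h_1h_2\cdots h_n$, coupled with the sequence $(\xi_i,\zeta_i)$ as described just before the statement. Fix $g$ and $\ep$. Since fit times occur infinitely often almost surely, we choose $m$ large, with $g\in F_1\cup\dots\cup F_{m-1}$, such that with probability at least $1-\ep$ there is a fit time for $m$, i.e.\ at some time $n<\Phi_m$. We then set $N=\Phi_m$.

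On the event of a fit time at time $n$ for $m$, we decompose
\[
h_1\cdots h_N = u\, h_n\, v, \qquad u=h_1\cdots h_{n-1},\quad h_n\in F_m .
\]
Two properties of the fit time make this decomposition useful.
\begin{itemize}
\item Every $\xi_i$ with $i<n$ is smaller than $m$, so each such $h_i$ lies in $K_{m-1}=S_1\cup\dots\cup S_{m-1}\cup F_1\cup\dots\cup F_{m-1}$. Since $n<\Phi_m$, this gives $u\in K_{m-1}^{\Phi_m}$, and hence $gu\in K_{m-1}^{\Phi_m+1}\subseteq K_{m-1}^{5\Phi_m}$.
\item Conditionally on $n$, on $u$, and on $v$ (which is independent of $h_n$), the element $h_n$ is uniform on $F_m$.
\end{itemize}

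\textbf{Estimating the conditional pieces.} The measures $g\mu^{*N}$ and $\mu^{*N}$ can be written as mixtures over $(n,u,v)$ of the measures
\[
gu\,\U(F_m)\,v \qquad\text{and}\qquad u\,\U(F_m)\,v
\]
respectively, plus a remainder of mass at most $\ep$ coming from the event of no fit time. Right multiplication by $v$ is an isometry of $\ell^1$, so
\[
\|gu\,\U(F_m)v-u\,\U(F_m)v\| \le \|gu\,\U(F_m)-\U(F_m)\|+\|u\,\U(F_m)-\U(F_m)\| .
\]
Each term on the right is at most $\frac{|xF_m\Delta F_m|}{|F_m|}$ for some $x\in K_{m-1}^{5\Phi_m}$. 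By property (3) of F{\o}lner--switching sets, $F_m$ is $\frac1m$--F{\o}lner for this set, so the right-hand side is at most $2/m$. Integrating over $(n,u,v)$ gives
\[
\|g\mu^{*N}-\mu^{*N}\|\le \frac{2}{m}+2\ep .
\]
Letting $m\to\infty$ and $\ep\to 0$, and using the monotonicity in $N$, we conclude that $\|g\mu^{*N}-\mu^{*N}\|\to 0$ for every $g$. This gives the Liouville property.

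\textbf{Main obstacle.} The delicate step is making the conditioning rigorous. The fit time is defined through the whole past $(\xi_i,\zeta_i)_{i\le n}$, and we need $h_n$ to remain uniform on $F_m$ given this information. It is, because the fit-time event depends only on the labels $(\xi_i,\zeta_i)$ and not on the chosen group elements.

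A second point is the choice of $n$: we take the first fit time for $m$. This is a stopping time relative to the labels, and it is determined by time $N=\Phi_m$. The remaining part $v=h_{n+1}\cdots h_N$ then has a law that depends only on $N-n$ and is independent of $h_n$ and of $u$.

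A third point is that the probability of a fit time for some large $m$ is close to $1$. This does not directly give a fit time for a prescribed $m$. We handle it by working with the event "there exists a fit time for some $m'\ge m$", which by Theorem~\ref{them:longfit}-type arguments has probability tending to one. We then use the first such $m'$ together with $N$ large. Since the occurrence of a fit time for $m'$ at time $n$ forces $n<\Phi_{m'}$, we bound $n$ uniformly only after choosing a finite window of values $m'\in[m,M]$ carrying probability at least $1-\ep$. Taking $N=\Phi_M$ and applying the F{\o}lner estimate with the index $m'$ then gives a bound of at most $2/m$ for every $m'$ in the window.
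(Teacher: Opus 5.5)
Your argument is correct. Its engine is the same as the paper's. You cut the word at a fit time $n'$ for a level $m'$ and note that the prefix $u$ and $gu$ lie in $K_{m'-1}^{5\Phi_{m'}}$, because $n'<\Phi_{m'}$ and $g\in K_{m'-1}$. You absorb both with the $\frac{1}{m'}$--F{\o}lner property of $F_{m'}$ and discard the suffix by right-invariance of the $\ell^1$ norm. You then conclude with monotonicity of $n\mapsto\|g\mu^{*n}-\mu^{*n}\|$ and the bound $|f(g)-f(e)|\le\|f\|_\infty\|g\mu^{*n}-\mu^{*n}\|$.

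The difference is in how the exceptional event is controlled, and there your route is the more robust one. The paper ties the cut to the running maximum. Its $T_n$ is the event that $M_n$ is not reached at a fit time (or $M_n\le\varphi_n$), and it asserts that almost surely infinitely many fit times yield a subsequence with $\mathbb{P}(T_{n_j})\to 0$. That does not follow, because $\mathbb{P}(\limsup_n T_n^c)=1$ does not force $\mathbb{P}(T_n^c)$ close to $1$ for any $n$. Take $p=q$ and $t=\frac12$. The labels $\zeta_i$ are then fair coins independent of the $\xi_i$, so the first occurrence of $M_n$ has label $0$ with probability exactly $\frac12$. Hence $\mathbb{P}(T_n)\ge\frac12$ for every $n$, even though fit times occur infinitely often almost surely.

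Your observation removes the need for $m'=M_N$: the splitting $u\,h_{n'}\,v$ is valid for every $N\ge n'$, whatever the walk does after $n'$. You fix $m$ from $g$ and the target accuracy. You then pick $M$ with $\mathbb{P}(\mbox{fit time for some } m'\in[m,M])\ge 1-\varepsilon$, which is possible because the hypothesis gives, almost surely, a fit time at some level $m'\ge m$. Taking the deterministic time $N=\Phi_M$ gives exactly the single-time estimate $\|g\mu^{*N}-\mu^{*N}\|\le \frac{2}{m}+2\varepsilon$ that monotonicity requires.

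When you write this up, build it around the window from the start. Your opening choice of a single $m$ carrying a fit time with probability at least $1-\varepsilon$ is not available in general. The independence of $v$ from $(u,h_{n'})$ is unnecessary, since you condition on $v$ anyway.

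Also delete the two ``equivalent'' criteria in your first paragraph. First, $\|g\mu^{*n}-\mu^{*n}\|\to 0$ is sufficient but not necessary for the Liouville property: it fails by parity for the simple random walk on $\mathbb{Z}$. Second, $\|\mu^{*n}-\mu^{*(n+1)}\|\to0$ does not imply it at all: it holds for every lazy random walk, including non-Liouville ones on free groups. Neither claim is used, so the proof stands.
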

\begin{proof}
First, we will show that $\|g\mu^{*n}- \mu^{*n}\| \to 0$ as $n\to \infty$  for every $g\in G$.

Let $M_n=\max\{\xi_1,\dots,\xi_n\}$, where $(\xi_i,\zeta_i)_{i\geq 1}$ is a sequence of i.i.d. random variables based on 
$\al_{t,p,q}$ and $\varphi$ be non-constant and non-decreasing as in Lemma~\ref{lem:phi}. Let $T_n$ be the set of  increments of the random walk which do not have a fit time at time $n$ for $M_n$  or $M_n \leq \varphi_n$. % that is

By abuse of notation, we also denote by $T_n$ the image of $T_n$ under  $(h_i)_{i\geq 1} \to h_1\dots h_n$.
We split $\mu^{*n}$ according to $T_n$ and its complement $T^c_n$:
\begin{equation}\label{eq:split}
\mu^{*n} =\mu^{*n}|_{T_n^c} +  \mu^{*n} |_{T_n} .
\end{equation}
Note that $\mu^{*n}|_{T_n^c}$ can be written as
\begin{equation}\label{eq:combin}
\mu^{*n} |_{T_n^c}= \sum_{k_1,\cdots,k_n\in \NN} c_{k_1,\cdots,k_n} \beta_{k_1}*\cdots *\beta_{k_n}|_{T_n^c},
\end{equation}
where  each $\beta_{i_j}$ is either $\U(F_{i_j})$ or $\U(S_{i_j})$. 
However, the definition of $T^c_{n}$ implies that for every  $\beta_{k_1}*\cdots *\beta_{k_n}|_{T_n^c}$ there exist probability measures $\lambda$ and $\vartheta$ such that
$$
\beta_{k_1}*\cdots *\beta_{k_n}|_{T_n^c} = \lambda * \U(F_m) * \vartheta, 
$$
where $\varphi_n<m=\max\{k_1,\dots, k_n\}$ such that  $n < \Phi_m$ and
\begin{equation}\label{eq:etasup}
\mbox{ support of }  \lambda\subseteq \Big(\bigcup_{j=1}^{m-1} (S_j\cup F_j)\Big)^{n-1} \subseteq  \Big(\bigcup_{j=1}^{m-1} (S_j\cup F_j)\Big)^{5\Phi_m}.
\end{equation}
 
Thus by the properties of F{\o}lner--switching sets 
\begin{equation}\label{eq:eta1}
\| \lambda * \U(F_m)*\vartheta - \U(F_m)*\vartheta\| \leq  \| \lambda * \U(F_m) - \U(F_m)\| \leq \frac{1}m\cdot
\end{equation}
On the other hand, $\varphi_n<m $   for  every $(h_i)_{i\geq 1}$ in $T_n^c$. Thus,  
$$
g\in  \Big(\bigcup_{j=1}^{\varphi_n} (S_j\cup F_j)\Big)\subseteq  \Big(\bigcup_{j=1}^{m-1} (S_j\cup F_j)\Big).
$$
Also, the support of $g\lambda$ is at most $\Big(\bigcup_{j=1}^{m-1} (S_j\cup F_j)\Big)^n$ and $n<5 \Phi_m$,   so by properties of F{\o}lner--switching sets 
\begin{equation}\label{eq:eta2}
\| g \lambda*\U(F_m) - \U(F_m)\|  \leq \frac{1}{m} < \frac{1}{\varphi_n}\,.
\end{equation}
Combining (\ref{eq:eta1}) and (\ref{eq:eta2}) implies that
\begin{equation}\label{eq:eta3}
\lim_{n\to \infty} \|g\mu^{*n}|_{T_n^c}- \mu^{*n}|_{T_n^c}\| = 0 \ \ \ \forall g \in \Big(\bigcup_{j=1}^{\varphi_n} (S_j\cup F_j)\Big) \,.
\end{equation}
On the other hand, almost every increment  has infinitely many fit times and by Lemma~\ref{lem:phi}  $M_n > \varphi_n$ for all sufficiently large $n$. Thus, we can find a subsequence $(T_{n_j})_{j\geq 1}$ such that
  \begin{equation}\label{eq:lessep}
\lim_{j\to \infty} \PP(T_{n_j})= \|  \mu^{*{n_j}}|_{T_{n_j}} \| = 0 \,.
 \end{equation}
 We have 
\begin{equation}\label{eq:eta4}
 \|g\mu^{*n} - \mu^{*n}\| \leq   \|g\mu^{*n}|_{T_n^c} - \mu^{*n}|_{T_n^c}\| + 2 \|  \mu^{*n}|_{T_n} \| \,.
\end{equation}
Combining (\ref{eq:eta3}), (\ref{eq:lessep}), and (\ref{eq:eta4}) implies that a subsequence of  $\|g\mu^{*n} - \mu^{*n}\|$ converges to zero for every $g \in G$ as $\varphi_n$ is non-decreasing.  Because $\|g\mu^{*n} - \mu^{*n}\| $ is decreasing,  we  obtain  $\lim_{n\to\infty} \|g\mu^{*n} - \mu^{*n}\|=0$ for every $g\in G$. 
Now suppose that $f$ is bounded $\mu$--harmonic. Thus,
$$
|f(g)-f(e)| \leq \sum_{h\in G} |f(gh) \mu^{*n}(h) - f(h) \mu^{*n}(h)| \leq \|f\|_\infty \|g\mu^{*n}-\mu^{*n}\| \,.
$$
We have $\lim_{n\to\infty}\|g\mu^{*n}-\mu^{*n}\|=0$, thus $f$ must be constant, and $\mu$ is Liouville. 
\end{proof}
\begin{corollary}\label{cor:gaptri}
Suppose that $\al_{t,p,q}$ (Definition \ref{df:alpha}) has a long gap for $q$ (Definition~\ref{df:gap}). Then,  the associated probability measure $\mu_{t,p,q}$ (Definition \ref{df:mu}) is Liouville. 
\end{corollary}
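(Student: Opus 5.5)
This corollary follows by chaining two earlier results: Theorem~\ref{them:longfit} produces fit times from the long gap, and Theorem~\ref{thm:trivial} turns fit times into the Liouville property. The only care needed is to check that both results use the same ladder $\Phi$.

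First, fix the ladder. In Definition~\ref{df:mu}, the F{\o}lner--switching sequence $(F_n,S_n,\Phi_n)_{n\geq 1}$ is built from a universal $q$--ladder $\Phi$ supplied by Lemma~\ref{lem:universal}. Since $\al_{t,p,q} = tp + (1-t)q \geq (1-t)q$, we may apply Lemma~\ref{lem:universal} with $d=1-t$ (or with $d=(1-t)d_0$ if $\Phi$ was obtained from some $\bar\al\leq d_0^{-1}q$). This shows that $\Phi$ is also an $\al_{t,p,q}$--ladder. Consequently, the notion of fit time in Definition~\ref{df:fittime}, taken relative to this $\Phi$, is the same one used in the proof of Theorem~\ref{thm:trivial}. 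There the bound $n<\Phi_m$ is what lets us invoke the F{\o}lner property of $F_m$ for the set $(\bigcup_{j<m}(S_j\cup F_j))^{5\Phi_m}$.

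Second, produce the fit times. Let $(\xi_i,\zeta_i)_{i\geq1}$ be i.i.d. with law $\al_{t,p,q}$. By hypothesis $\al_{t,p,q}$ has a long gap for $q$, so Theorem~\ref{them:longfit} gives that almost surely the sequence has infinitely many fit times with respect to the $\al_{t,p,q}$--ladder $\Phi$. We then transfer this to the increments $(h_i)_{i\geq1}$, which are i.i.d. with law $\mu_{t,p,q}$, by the coupling described at the start of Section~\ref{sec:liouville}: $h_i$ is chosen uniformly from $F_k$ when $(\xi_i,\zeta_i)=(k,0)$ and uniformly from $S_k$ when $(\xi_i,\zeta_i)=(k,1)$. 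Under this coupling, almost every sample path of the $\mu_{t,p,q}$ random walk has infinitely many fit times.

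Third, conclude. The previous step supplies exactly the hypothesis of Theorem~\ref{thm:trivial}, which therefore yields that $\mu_{t,p,q}$ is Liouville.

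The main obstacle is the ladder-compatibility check in the first step. The F{\o}lner--switching sets are built from the $q$--universal ladder $\Phi$, while Theorem~\ref{them:longfit} needs $\Phi$ to be an $\al_{t,p,q}$--ladder. Lemma~\ref{lem:universal} is stated precisely to bridge this gap, via the domination $\al_{t,p,q}\geq (1-t)q$. Beyond this check, the argument is a direct concatenation of the two theorems.
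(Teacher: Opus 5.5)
Your proposal is correct and matches the paper's proof, which simply chains Theorem~\ref{them:longfit} (a long gap gives infinitely many fit times) with Theorem~\ref{thm:trivial} (fit times give the Liouville property). Your explicit check that the universal $q$--ladder $\Phi$ is also an $\al_{t,p,q}$--ladder, via Lemma~\ref{lem:universal} with $d=1-t$, is left implicit in the paper's proof of this corollary, but it is exactly the check the paper carries out in the proof of Theorem~\ref{thm:main1}.
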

\begin{proof}
Because $\al_{t,p,q}$ has a long gap for $q$, Theorem~\ref{them:longfit} implies that $\al_{t,p,q}$ has infinitely many fit times. Thus, Theorem~\ref{thm:trivial} implies $\mu_{t,p,q}$ is Liouville. 
\end{proof}

\subsection{Convex combinations of Liouville measures}
We  are now ready to prove  Theorem~\ref{thm:A}.

\begin{theorem}[=Theorem~\ref{thm:A}]\label{thm:main1}
Let $G$ be a countable non-hyper-FC-central amenable group. For every $k\geq 2$,  there exists a sequence of symmetric and non-degenerate probability measures $(\mu_m)_{m\geq1}$ such that $\mu = \sum_{m\geq 1} b_m \mu_m$ is non-Liouville if and only if the probability measure $b$ on $\NN$ has at least $k$ atoms. 
\end{theorem}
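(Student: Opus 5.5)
We first reduce to the ICC case and then combine Lemma~\ref{lem:conditionalgap}, Corollary~\ref{cor:gaptri} and Lemma~\ref{lem:nont}.

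\textbf{Reduction to an ICC quotient.} Since $G$ is not hyper-FC-central, it has a normal subgroup $N$ with $Q=G/N$ ICC, and $Q$ is amenable. Write $\pi\colon G\to Q$.
\begin{itemize}
\item We build a Følner--switching family $(F_n,S_n,\Phi_n)$ directly in $G$, as in Lemma~\ref{lem:folner-switch}.
\item The $F_n$ are symmetric Følner sets in $G$, pairwise disjoint and exhausting $G$; this uses only amenability of $G$.
\item The $s_n\in G$ are chosen so that $\pi(s_n)$ is switching in $Q$ for $\pi\big((S_1\cup\dots\cup S_{n-1}\cup F_1\cup\dots\cup F_n)^{5\Phi_n}\big)$. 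Such elements exist by Remark~\ref{re:switchin} applied in $Q$.
\item Switching in $Q$ implies switching in $G$, since $g^iBg^j\cap B\ni x\neq e$ projects to a nontrivial intersection, apart from the harmless identity issue, which we handle by choosing $\pi(s_n)$ outside a finite set.
\end{itemize}
Theorem~\ref{thm:trivial} only uses the Følner and switching inequalities in $G$, so the Liouville argument goes through verbatim in $G$. For non-Liouville statements, push the measure forward to $Q$. There it has exactly the form in Theorem~\ref{thm:erchler-kaimanovich}, with $G_n=\pi(F_n)$ and switching sets $\pi(S_n)$. A non-constant bounded harmonic function on $Q$ pulls back to one on $G$. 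So it suffices to verify the hypotheses of Lemma~\ref{lem:nont} in $Q$.

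\textbf{Choice of $p,q$ and the measures.} Fix $p$ fully supported with $p_n\asymp n^{-2}$, and set $q_n=c\,p_n/n^2$.
\begin{itemize}
\item Then $(p_n+q_n)/\sum_{m\ge n}(p_m+q_m)\asymp 1/n$, so the first sum in (\ref{eq:pq}) converges.
\item Also $q_n/(p_n+q_n)\asymp n^{-2}$, so the second sum converges.
\item Let $\Phi$ be a universal $q$-ladder (Lemma~\ref{lem:universal}), and take the $k$-cover $(A_j)_{j\ge1}$ of Lemma~\ref{lem:conditionalgap}.
\item Define $\mu_j=\mu_{1/2,\,p^{A_j},\,q}$ as in Definition~\ref{df:mu}. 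Each $\mu_j$ is symmetric and fully supported.
\end{itemize}
By linearity, for a probability vector $b$ we have $\sum_j b_j\mu_j=\mu_{1/2,\tilde p,q}$ with $\tilde p=\sum_j b_jp^{A_j}$. The underlying measure on $\NN$ is $\sum_j b_j\al_j$ with $\al_j=\tfrac12p^{A_j}+\tfrac12 q$.

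\textbf{The two directions.}
\begin{itemize}
\item \emph{At most $k-1$ atoms.} Lemma~\ref{lem:conditionalgap} says this combination has a long gap for $q$. Corollary~\ref{cor:gaptri} then gives that the combination is Liouville.
\item \emph{At least $k$ atoms, lower bound.} Pick $k$ indices $j_1,\dots,j_k$ with $b_{j_i}>0$. The sets $A_{j_1},\dots,A_{j_k}$ cover $\NN$, so $\tilde p_n\ge \min_i b_{j_i}\,p_n$ for every $n$. Hence $d(p+q)\le\tilde p+q$.
\item \emph{At least $k$ atoms, upper bound.} We have $\tilde p\le (\sup_j p(A_j)^{-1})\,p$, so we need $\inf_j p(A_j)>0$. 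We arrange this by modifying the bijection $\psi$ in Lemma~\ref{lem:kcovering} so that $\psi$ of the first block is a $k$-tuple lying in no $B_i$. For example, reindex so that some $\psi_j$ has $l_2,\dots,l_k$ equal to a symbol $0$ unused by the $B_i$. Then $D_1\subseteq A_j$ for all $j$, and $p(A_j)\ge p(D_1)>0$. We must recheck that the $k$-cover property and the long-gap estimate of Lemma~\ref{lem:conditionalgap} survive this modification; only a finite set changes.
\item With both bounds, Lemma~\ref{lem:nont}, applied in $Q$, gives that the pushforward is non-Liouville, hence so is $\sum b_j\mu_j$.
\end{itemize}

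\textbf{Main obstacle.} The step I expect to be delicate is the reduction to $Q$. One must check that the pushed-forward data satisfy Theorem~\ref{thm:erchler-kaimanovich}: the sets $\pi(F_n)$ may overlap, and $\mu(G_n)$ must be read with the given weights. Since Theorem~\ref{thm:erchler-kaimanovich} only requires $Q=\bigcup G_n$ and the switching condition, overlaps should be harmless. I would first attempt to verify this directly. If it fails, the fallback is to first handle $G$ ICC, with $G_n=F_n$, and then lift.
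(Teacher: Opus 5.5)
Your proposal is correct in substance and shares the paper's core: the $k$--cover of Lemma~\ref{lem:conditionalgap}, long gaps together with Corollary~\ref{cor:gaptri} for at most $k-1$ atoms, and Lemma~\ref{lem:nont} for at least $k$ atoms. The reduction, however, is genuinely different.

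\textbf{How the reductions compare.} The paper invokes \cite[Proposition~5.10]{Erschler-Kaimanovich} (a measure is Liouville on $G$ iff its image modulo the hyper-FC-centre is Liouville). It then builds everything in an ICC group, leaving implicit the lift of the measures back to $G$; that lift must be linear, symmetric and non-degenerate. You instead build the measures on $G$ itself, with F{\o}lner sets in $G$ and switching elements chosen in $Q$. You prove Liouvilleness directly in $G$, and non-Liouvilleness only uses the trivial fact that bounded harmonic functions on $Q$ pull back to $G$. This avoids Proposition~5.10 and the lifting step, and it works with any ICC quotient $Q$.

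\textbf{Corrections and simplifications.}

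\emph{Switching does not lift.} Your claim that switching in $Q$ implies switching in $G$ is false, and avoiding a finite set does not repair it. If $N$ is central and $x\in K\cap N$ with $x\neq e$, then $x\in sKs^{-1}\cap K$ for every $s\in G$. You do not need the claim, so drop it. The proof of Theorem~\ref{thm:trivial} uses only property (3) of the family (the F{\o}lner inequality), never property (4).

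\emph{Overlaps are not an obstacle.} The overlap of the sets $\pi(F_n)$ is not a real problem. In the paper's own construction each $s_n$ already lies in some $F_m$ with $m>n$. So Theorem~\ref{thm:erchler-kaimanovich} is effectively being applied to the mixture $\mu=\sum_n\al_n\mu_n$, with $\mu_n$ carried by $G_n\cup S_n$ and $\mu(G_n)$ read as the weight $(1-t)q_n$ of the F{\o}lner part. Your pushforward $\pi_*\mu=\sum_n\al_n\,\pi_*\mu_n$ fits that reading verbatim, so no fallback is needed.

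\emph{The change to $\psi$ is unnecessary.} In Lemma~\ref{lem:kcovering}, take $\psi_l$ and $\psi_{l'}$ whose coordinates $2,\dots,k$ form disjoint sets. Then every $A_j$ contains $D_l$ or $D_{l'}$, so $\inf_j p(A_j)>0$ holds automatically. Alternatively, take $t_j=p(A_j)$ as the paper does. Then $t_jp^{A_j}=p|_{A_j}\le p$, and the upper bound needed in Lemma~\ref{lem:nont} comes for free.
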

\begin{proof} Note that a non-degenerate probability measure  on $G$ is Liouville if and only if its image in $G$ mod hyper-FC of $G$ is Liouville \cite[Proposition~5.10]{Erschler-Kaimanovich}.
 Thus without  loss of generality, we can assume $G$ is a countable amenable ICC group.  Let $p$ and $q$ be fully supported probability measures on $\NN$ that satisfy condition (\ref{eq:pq}). Suppose that $(F_n,S_n,\Phi_n)_{n\geq 1}$ is F{\o}lner--switching where $\Phi$ is a  universal $q$--ladder as in Lemma~\ref{lem:universal}. Let $(A_j)_{j\geq 1}$ be a $k$--cover as in Lemma~\ref{lem:conditionalgap}.  Let 
$p^j=p^{A_j}$, $t_j =p(A_j)$, and   
\begin{equation}\label{eq:mui}
    \mu_j= \mu_{t_j,p^j,q} = t_j \sum_{n=1}^\infty p^j_n \U(S_n) + (1-t_j) \sum_{n=1}^\infty q_n \U(F_n) \,.
\end{equation}
Since $\Phi$ is a universal $q$--ladder and $\al_j=t_j p^j+(1-t_j)q \geq (1-t_j)q$, by Lemma~\ref{lem:universal} $\Phi$ is also an $\al_j$--ladder. By Lemma~\ref{lem:conditionalgap}, every $k-1$ convex combination of $\al_j$ has a long gap for $q$, thus Corollary~\ref{cor:gaptri} implies that every $k-1$ convex combination of $\mu_j$ is Liouville. 

Conversely, let $b$ be a probability measure on $\NN$ with at least $k$ atoms and $\mu=\sum_{i=1}^\infty b_i \mu_i$, where $\mu_i$ is defined as in (\ref{eq:mui}). It is easy to see that $\mu =  \mu_{c,\tilde{p},q} $,  where 
$$
c= \sum_{i=1}^\infty b_i t_i,\ \ \ c_n = \sum_{i=1}^\infty b_i 1_{A_i}(n), \ \   \ \ \ \tilde{p}_n = \frac{c_n p_n}{c} \,.
$$ 
Recall that $(A_i)_{i\geq 1}$ is a $k$--cover, so the union of $k$ distinct sets $A_i$s is $\NN$.  Because $b$ has at least $k$ atoms,  the probability measure $\tilde{p}$ is fully supported on $\NN$.  Let 
$ \bar b = \bar b_1+ \dots +\bar b_{k-1}$ where $ \bar b_1\geq \dots \geq \bar b_{k-1}$ are the first $k-1$ largest values of $\{b_i: i\in \NN\}$. We claim $c_n \geq 1- \bar b$ for every $n\geq 1$. Suppose that $n\not\in (A_{l_1} \cup \dots \cup A_{l_{k-1}})$ for some $l_1,\dots,l_{k-1}$. Because $(A_i)_{i\geq1}$ is a $k$--cover, we conclude $n\in A_i$ for every $i \not \in \{l_1,\dots,l_{k-1} \}$. Thus $c_n \geq 1-b_{l_1}- \cdots - b_{l_{k-1}} \geq 1- \bar{b}$. Therefore, $(1-\bar b)p_n \leq c \bar p_n \leq p_n$, and by Lemma~\ref{lem:nont} $\mu=\mu_{c,\tilde{p},q}$ is non-Liouville.
\end{proof}

\section{Liouville and non-Liouville with Finite Entropy}\label{sec:entropy}
We recall basic results regarding entropy. Let $X$ be a countable random variable on a probability space $(\Omega, P)$. Then the entropy of $X$
is defined as
$$
H(X) = -\sum_{x \in \Omega} P(X=x)  \log P(X=x) \,,
$$
where $0\log 0$ is defined as 0. 
Suppose that $Y$ is a countable random variable. Then, the conditional entropy of a given outcome $y$ is defined as 
$$
 H(X|Y=y) = -\sum_{x\in X} P(X=x|Y=y) \log P(X=x|Y=y)\,,
$$ and the conditional entropy of $X$ given $Y$ is defined as
$$
H(X | Y) = \sum_{y} H(X |Y=y) P(Y=y)\,.
$$

\begin{lemma}\label{lem:basic-entropy}
    Let $X$ and $Y$ be two discrete random variables with finite entropy, then 
 $$H(X|Y) \leq H(X) \leq H(X|Y) + H(Y) \,.
 $$
\end{lemma}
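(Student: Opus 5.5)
The plan is to deduce both inequalities from the chain rule for the joint entropy together with Gibbs' inequality (equivalently, concavity of $x\mapsto -x\log x$, i.e.\ Jensen's inequality). We work with the joint distribution $P(X=x,Y=y)$ and write $P(y)=P(Y=y)$, $P(x|y)=P(X=x\mid Y=y)$.

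\textbf{Chain rule.} We first record
\[
H(X,Y) = H(Y) + H(X|Y) = H(X) + H(Y|X).
\]
This follows by expanding $-\sum_{x,y} P(x,y)\log P(x,y)$ with $\log P(x,y)=\log P(y)+\log P(x|y)$ and summing over $x$ for fixed $y$. Because all summands $-P\log P$ are nonnegative, rearrangement is legitimate even before we know finiteness. The conclusion is that all these quantities are finite whenever $H(X),H(Y)<\infty$, because $H(X|Y)\le H(X,Y)\le$ the right-hand side of the bound derived below.

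\textbf{Upper bound.} From the chain rule, $H(X)\le H(X,Y)=H(Y)+H(X|Y)$, since $H(Y|X)\ge 0$ as an average of nonnegative entropies. This gives $H(X)\le H(X|Y)+H(Y)$.

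\textbf{Lower bound.} For each $x$, concavity of $\eta(u)=-u\log u$ and Jensen's inequality with weights $P(y)$ give
\[
\sum_y P(y)\,\eta\big(P(x|y)\big)\le \eta\Big(\sum_y P(y)P(x|y)\Big)=\eta\big(P(x)\big).
\]
Summing over $x$, and using Tonelli since all terms are nonnegative, yields $H(X|Y)\le H(X)$.

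\textbf{Main obstacle.} The only delicate point is justifying Jensen's inequality for a countable convex combination with a bounded concave function on $[0,1]$. This is standard: one can pass to the limit over finite partial sums, or use the supporting-line form $\eta(u)\le \eta(u_0)+\eta'(u_0)(u-u_0)$ at $u_0=P(x)>0$, handling $P(x)=0$ trivially.
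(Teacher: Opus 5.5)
Your proof is correct. The paper gives no proof of this lemma; it simply quotes it as a standard fact when recalling basic properties of entropy. Your argument is the standard textbook one that the citation implicitly relies on:
\begin{itemize}
\item the upper bound follows from the chain rule $H(X,Y)=H(Y)+H(X|Y)=H(X)+H(Y|X)$, established in $[0,\infty]$ by splitting into nonnegative terms;
\item the lower bound follows from Jensen's inequality for the concave function $u\mapsto -u\log u$, averaged over $Y$ and then summed over $x$ by Tonelli.
\end{itemize}

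One small cleanup concerns your finiteness remark. It is unnecessary, because every identity and inequality you use holds in $[0,\infty]$; the finite-entropy hypothesis is never actually needed. It is also misattributed. Bounding $H(X,Y)$ by the right-hand side of the upper bound is circular, since that right-hand side is $H(X|Y)+H(Y)=H(X,Y)$ itself. The correct finite bound is $H(X,Y)=H(Y)+H(X|Y)\le H(Y)+H(X)$, and it uses the lower bound.
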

Let $\mu$ be a probability measure on a countable group $G$. Let $w_n=g_1\cdots g_n$ where $g_1,\dots,g_n$ are i.i.d. according to $\mu$. Thus the distribution of $w_n$ is $\mu^{*n}$, and we write $H(\mu^{*n})=H(w_n)$. It is straightforward to check the subadditivity of entropy, $H(\mu^{*(m+n)}) \leq H(\mu^{*n}) + H(\mu^{*m})$. Thus, the asymptotic entropy of $\mu$ is defined as 
$$
h(\mu) = \lim_{n\to\infty} \frac{1}{n}H(\mu^{*n}) = \inf_n \frac1n H(\mu^{*n}).
$$
The following classic entropy result will be used to determine whether a probability measure with finite entropy is Liouville.
\begin{theorem}\cite[Theorem 1.1]{K-Vershik83}\cite[Theorem, p.~255]{Der80}
Let $\mu$ be a probability measure with finite entropy on a countable group $G$. Then, $\mu$ is Liouville if and only if $h(\mu)=0$.
\end{theorem}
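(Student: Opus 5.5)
The plan is to follow the Kaimanovich--Vershik/Derriennic argument. First express $h(\mu)$ as a conditional entropy of the first increment given the tail of the random walk, then compare that tail with the Poisson boundary. Let $w_n=g_1\cdots g_n$ be the random walk. Since $H(\mu)<\infty$, all the entropies below are finite by Lemma~\ref{lem:basic-entropy}.

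\emph{Step 1 (entropy drop formula).} The pair $(w_1,w_n)$ determines $(g_1, g_1^{-1}w_n)$, and $g_1^{-1}w_n$ has law $\mu^{*(n-1)}$ and is independent of $g_1$. This gives $H(w_1,w_n)=H(\mu)+H(\mu^{*(n-1)})$, and hence
$$
H(w_1\,|\,w_n)=H(\mu)+H(\mu^{*(n-1)})-H(\mu^{*n}).
$$
By the Markov property, $H(w_1|w_n)=H(w_1|w_n,w_{n+1},\dots)$. The $\sigma$-algebras $\sigma(w_n,w_{n+1},\dots)$ decrease to the tail $\sigma$-algebra $\mathcal T$. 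By the (reverse) martingale convergence theorem, applied to the conditional probabilities $P(w_1=g\,|\,w_n,w_{n+1},\dots)$, together with dominated convergence (the sum is dominated by terms with finite total because $H(\mu)<\infty$), we get $H(w_1|w_n)\downarrow H(w_1|\mathcal T)$. Therefore the increments $H(\mu^{*n})-H(\mu^{*(n-1)})$ converge to $H(\mu)-H(w_1|\mathcal T)$. Their Ces\`aro averages then give
$$
h(\mu)=H(w_1)-H(w_1\,|\,\mathcal T).
$$
So $h(\mu)=0$ if and only if $w_1$ is independent of $\mathcal T$.

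\emph{Step 2 (from tail to Poisson boundary).} The tail and invariant $\sigma$-algebras may differ because of periodicity, so replace $\mu$ by the lazy measure $\mu'=\frac12(\delta_e+\mu)$. Its bounded harmonic functions coincide with those of $\mu$, since $f=\mu' f$ holds exactly when $f=\mu f$. For an aperiodic walk such as $\mu'$, the tail coincides mod $0$ with the invariant $\sigma$-algebra (Derriennic's $0$--$2$ law). Moreover $h(\mu')=\frac12 h(\mu)$ by a binomial-thinning argument: $\mu'^{*n}$ is a mixture of $\mu^{*k}$ with $k\sim\mathrm{Bin}(n,1/2)$, and the mixing variable costs only $O(\log n)$ entropy. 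So it suffices to prove the theorem for $\mu'$ with the Poisson boundary in place of $\mathcal T$.

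\emph{Step 3 (both directions).} Suppose $h=0$. Applying Step 1 to $\mu^{*k}$, for which $h(\mu^{*k})=k\,h(\mu)=0$, shows that $w_k$ is independent of $\mathcal T$ for every $k$. Let $f$ be bounded harmonic with boundary limit $F$, so that $f(g)=\mathbb E[F\,|\,w_k=g]$. Independence gives $f(g)=\mathbb E F=f(e)$ for all $g\in\operatorname{supp}\mu^{*k}$. Non-degeneracy then forces $f$ to be constant. Conversely, if the boundary (equivalently $\mathcal T$) is trivial, then $H(w_1|\mathcal T)=H(w_1)$ and $h=0$.

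The main obstacle is Step 2, the identification of the tail with the Poisson boundary. Passing to the lazy measure is the cleanest route. The convergence claim in Step 1, justifying the limit of conditional entropies under a decreasing filtration, is routine but requires the finite entropy hypothesis.
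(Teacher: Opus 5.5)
Your proposal is correct and is essentially the Kaimanovich--Vershik/Derriennic argument: the entropy drop formula $h(\mu)=H(\mu)-H(w_1\,|\,\mathcal T)$, combined with identifying the tail with the Poisson boundary (here via the lazy walk and the $0$--$2$ law); the paper does not reprove this but simply cites it. Two small slips, neither affecting validity: $H(w_1\,|\,w_n)$ is non-decreasing in $n$ because the conditioning $\sigma$-algebras shrink, so the arrow should be $\uparrow$; and applying Step~1 to $\mu^{*k}$ gives independence of $w_k$ only from the tail of the subsampled walk $(w_{kn})_{n}$, which still suffices since $F=\lim_n f(w_{kn})$ is measurable with respect to that tail (alternatively, rerun Step~1 with $w_k$ in place of $w_1$ to get $H(w_k\,|\,\mathcal T)=H(\mu^{*k})-k\,h(\mu)$).
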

We denote by $\P(A)$ (and $\P_{sym}(A)$) the sets of probability measures (and symmetric probability measures) on $A$.
\begin{definition}[\bf $\D$--metric] \label{df:dmetric}
Let $\lambda$ and $\theta$ be two probability measures on a countable group $G$. We define the $\D$-metric between  $\theta$ and $\lambda$ by
$$
\D(\theta, \lambda) = \| \theta - \lambda\| +  |H(\theta) -H(\lambda)| \,.
$$
It is straightforward to check that $\D$ is a metric on $\P(G)$. 
\end{definition}

\begin{lemma}\label{lem:continuity}
Let $G$ be a countable group and $n\in \NN$. The map $
\lambda \to H(\lambda^{*n})$ over $\P(G)$ is continuous with respect to $\D$. 
\end{lemma}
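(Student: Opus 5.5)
We show that if $\D(\lambda_j,\lambda)\to 0$ then $H(\lambda_j^{*n})\to H(\lambda^{*n})$, arguing by induction on $n$. Since $\lambda^{*n}=\lambda^{*(n-1)}*\lambda$, it suffices to prove a two-factor statement: if $\D(\theta_j,\theta)\to0$ and $\D(\lambda_j,\lambda)\to0$, then $\D(\theta_j*\lambda_j,\theta*\lambda)\to 0$. Applying this with $\theta_j=\lambda_j^{*(n-1)}$, the inductive hypothesis gives $\D$-convergence of the first factor, and we conclude. The $\ell^1$ part is immediate, since
$$
\|\theta_j*\lambda_j-\theta*\lambda\|\le\|\theta_j-\theta\|+\|\lambda_j-\lambda\|.
$$
So the whole task is to prove convergence of the entropies.

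The lower bound comes from lower semicontinuity. Entropy is lower semicontinuous under pointwise (hence $\ell^1$) convergence: for each finite set $K$ the truncated sum $-\sum_{g\in K}\nu(g)\log\nu(g)$ is continuous, and $H$ is the supremum of these truncations. Hence
$$
\liminf_j H(\theta_j*\lambda_j)\ge H(\theta*\lambda).
$$

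The upper bound is the main obstacle, because entropy is not upper semicontinuous in $\ell^1$. This is exactly where convergence of $H(\theta_j)$ and $H(\lambda_j)$ must be used, and the plan is to reduce it to a uniform tail-entropy estimate. Say a sequence $\nu_j$ has \emph{uniformly small entropy tails} if for every $\ep>0$ there is a finite set $K$ with
$$
-\sum_{g\notin K}\nu_j(g)\log\nu_j(g)<\ep \quad\text{for all large } j.
$$
\textbf{Claim:} $\ell^1$-convergence together with $H(\nu_j)\to H(\nu)<\infty$ implies uniformly small tails. To prove it, choose $K$ with the tail of $H(\nu)$ below $\ep$. The finite-$K$ part of $H(\nu_j)$ converges to that of $H(\nu)$, so the tail of $H(\nu_j)$ converges to the tail of $H(\nu)$, which is less than $\ep$. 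Conversely, uniformly small tails plus $\ell^1$-convergence give $H(\nu_j)\to H(\nu)$. It therefore suffices to show that the convolutions $\theta_j*\lambda_j$ have uniformly small entropy tails.

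To get this, I represent $\theta_j*\lambda_j$ as the law of the product $XY$ of independent $X\sim\theta_j$ and $Y\sim\lambda_j$, and compare it with the pair $(X,Y)$. Fix finite sets $K_1,K_2$ carrying most of the mass and most of the entropy of $\theta$ and $\lambda$; by the Claim these sets work uniformly in $j$. Let $E$ be the indicator of the event $\{X\in K_1,\,Y\in K_2\}$. On this event $XY$ lies in the finite set $K_1K_2$, so the entropy tail of $XY$ outside $K_1K_2$ comes only from $\{E=0\}$. Its contribution is bounded, via Lemma~\ref{lem:basic-entropy}, by
$$
H(XY\mid E)\,\text{-type terms}+H(E)\le P(E=0)\,H\bigl(X,Y\mid E=0\bigr)+H(E),
$$
since $XY$ is a function of $(X,Y)$.

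Two points need care in this last bound. First, $H(E)$ is small because $P(E=0)$ is uniformly small. Second, $P(E=0)\,H(X,Y\mid E=0)$ is bounded by a constant multiple of the entropy tails of $\theta_j$ off $K_1$ and of $\lambda_j$ off $K_2$, plus terms of the form $-\delta\log\delta$ with $\delta$ the small mass. Establishing this is a routine computation using independence of $X$ and $Y$. One detail: the tail of $\nu_j*\ $ outside $K_1K_2$ is at most the sum over atoms of $-\nu\log\nu$ restricted to those atoms. To handle this cleanly I would instead bound $H(XY)$ directly by $H(XY\mid E)+H(E)$, and then compare with the finite-part limit.

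Combining the lower bound from lower semicontinuity with this uniform tail bound gives $H(\theta_j*\lambda_j)\to H(\theta*\lambda)$, which completes the induction.
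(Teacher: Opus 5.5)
Your proof is correct, and it takes a genuinely different route from the paper.

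\textbf{The paper's route.} The paper argues in one stroke for all $n$, with no induction. It uses a maximal coupling to write $\lambda=t\lambda_0+(1-t)w$ and $\theta=t\theta_0+(1-t)w$, where $2t=\|\theta-\lambda\|$. It then runs the $n$-step walk with Bernoulli selectors $b^t_1,\dots,b^t_n$ and splits according to whether some step uses $\theta_0$. Combining $H(X)\le H(X\mid Y)+H(Y)$ with concavity ($(1-t)^nH(w^{*n})\le H(\lambda^{*n})$), it obtains the explicit bound
\[
|H(\theta^{*n})-H(\lambda^{*n})|\le n\D(\theta,\lambda)+n(1-(1-t)^n)H(\lambda)+nH(b^t_1).
\]

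\textbf{Your route.} You prove joint sequential $\D$-continuity of $(\theta,\lambda)\mapsto\theta*\lambda$ and induct on $n$. The lower bound comes from lower semicontinuity of $H$, and the upper bound from uniform smallness of entropy tails.

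\textbf{What each buys.} The paper's route aims at an explicit modulus of continuity. Yours is only qualitative, but that is all Lemmas~\ref{lem:uniform} and~\ref{lem:neighbor} use. Yours also covers convolutions of different measures, and it makes explicit the real mechanism: uniform integrability of $-\nu\log\nu$.

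That mechanism is exactly what the paper's step $H(\theta^{*n}\mid b^t\neq0)\le nH(\theta)$ passes over. Conditioning on the event $\{b^t\neq0\}$ can raise entropy. Take $\lambda=\delta_e$ and $\theta=(1-t)\delta_e+t\,\U(S)$ with $e\notin S$. The left side is at least $\log|S|$, while $nH(\theta)=nH(b^t_1)+nt\log|S|$, so the inequality fails once $nt<1$ and $|S|$ is large. Conditioning on the whole vector $(b^t_1,\dots,b^t_n)$ instead leaves a term $ntH(\theta_0)$. Showing that this term tends to $0$ as $\D(\theta,\lambda)\to0$ is a tail estimate of the same type as your Claim.

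\textbf{Tightening your key estimate.} Let $\rho_j$ be $\theta_j\otimes\lambda_j$ restricted to the complement of $K_1\times K_2$. For finite $K$, write $T_K(\nu)=-\sum_{g\notin K}\nu(g)\log\nu(g)$. Two facts give the bound below: subadditivity of $x\mapsto -x\log x$ under the pushforward $(x,y)\mapsto xy$, and the identity $-ab\log(ab)=-b\,a\log a-a\,b\log b$.
\[
T_{K_1K_2}(\theta_j*\lambda_j)\le H(\rho_j)\le T_{K_1}(\theta_j)+T_{K_2}(\lambda_j)+\theta_j(G\setminus K_1)H(\lambda_j)+\lambda_j(G\setminus K_2)H(\theta_j).
\]
The cross terms are small because $H(\theta_j)$ and $H(\lambda_j)$ stay bounded. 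This is a second place where convergence of the factors' entropies is used, and your ``routine computation'' should say so. This display is also the clean version of your somewhat muddled ``one detail'' remark.
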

\begin{proof}
Suppose that $\lambda \in \P(G)$. If $n=1$, the proof is trivial. Suppose that $n>1$. We use the existence of maximal (optimal) coupling. Suppose that $\theta \in \P(G)$ such that $0<2t=\|\theta - \lambda\|<2$. 
Let 
$$
\tilde{w}=\min\{\lambda,\theta\}\,, \hspace{1cm} \tilde{\lambda} = \lambda-\tilde{w}\,,\hspace{1cm}\tilde{\theta}=\theta-\tilde{w} \,.
$$

We now define probability measures $w=\dfrac{\tilde{w}}{\|\tilde{w}\|}, \lambda_0= \dfrac{\tilde{\lambda}}{\|\tilde{\lambda}\|} $, and $\theta_0= \dfrac{\tilde{\theta}}{\|\tilde{\theta}\|}$. Thus,
$$
\lambda = t \lambda_0 + (1-t) w, \hspace{1cm} \theta = t \theta_0 +(1-t) w \,.
$$
 Let $ b_1^t $ be a Bernoulli random variable such that $ b_1^t = 1 $ corresponds to sampling according to $ \theta_0 $ with probability $ t $, and $ b_1^t = 0 $ corresponds to sampling according to $ w $ with probability $ 1 - t $.

We now estimate $H(\theta^{*n})$. Let $b^t = b_1^t+ \dots +b_n^t$, where $b_1^t,\dots,b_n^t$ are i.i.d. according to $b_1^t$. So, $b^t$ has the Binomial distribution counting the number of times sampling according to $\theta_0$.
Using the properties of entropy in Lemma~\ref{lem:basic-entropy}, 
$$
H(\theta^{*n}) \leq H(\theta^{*n}|b^t \not = 0) P(b^t \not =0) + H(\theta^{*n}|b^t = 0) P(b^t =0) + H(b^t) \,.
$$
By subadditivity, 
$$
H(\theta^{*n}|b^t \not = 0) \leq n H(\theta) \,, \ \ H(b^t) \leq n H(b^t_1) \,.
$$
Since $H(\theta^{*n}|b^t=0) P(b^t=0)=(1-t)^nH(w^{*n})\leq \min\{H(\lambda^{*n}), H(\theta^{*n})\}$, we conclude that 
\begin{equation}\label{eq:thetan}
H(\theta^{*n}) \leq  n(1-(1-t)^n)H(\theta) + H(\lambda^{*n})+n H(b_1^t).
\end{equation}
Note that we can write a similar upper-bound for $H(\lambda^{*n})$:
\begin{equation}\label{eq:2}
H(\lambda^{*n}) \leq  n(1-(1-t)^n)H(\lambda) + H(\theta^{*n})+n H(b_1^t) \,. 
\end{equation}
After using $H(\theta)  \leq \D(\theta, \lambda) +H(\lambda)$ and $1-(1-t)^n\leq 1$ in (\ref{eq:thetan}) and considering (\ref{eq:2}),
we obtain
$$
|H(\theta^{*n}) - H(\lambda^{*n}) |\leq n \D(\theta, \lambda)   + n(1-(1-t)^n )H(\lambda) +n H(b_1^t) \,.
$$
Note that $H(b^t_1)= -t \log t -(1-t) \log (1-t)$ and $n(1-t)^n H(\lambda)$ go to zero as $t \to 0$, which imply the map $\lambda \to H(\lambda^{*n})$ is continuous with respect to the $\D$--metric.
\end{proof}
\begin{definition}
    A group $G$ is called \emph{finitely Liouville} when any finitely supported symmetric probability measure on $G$ is Liouville. 
\end{definition}
For example, groups with subexponential growth, the lamplighter groups over $\ZZ$ and $\ZZ^2$, and  $S_\infty$, the infinite symmetric group of finite permutations of $\NN$,   are finitely Liouville.
\begin{lemma}\label{lem:uniform}
    Let $G$ be finitely Liouville  and $V$ a symmetric finite subset of $G$. Then, for every $\ep>0$ there exists $N$ (depending on $V$ and $\ep$)  such that 
    $$
    H(\mu^{*N}) < \ep N
    $$ 
    for every symmetric probability measure $\mu$ on $V$.
\end{lemma}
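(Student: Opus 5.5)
The plan is a compactness argument on the finite-dimensional set $\P_{sym}(V)$, combined with the $\D$--continuity of Lemma~\ref{lem:continuity} and the Kaimanovich--Vershik/Derriennic entropy criterion. Since $V$ is finite, $\P_{sym}(V)$ is a compact subset of a finite-dimensional simplex. On it, the $\ell^1$ topology coincides with the $\D$ topology, because $H$ is continuous on probability vectors with a fixed finite support. Hence, by Lemma~\ref{lem:continuity}, for each fixed $n$ the map $\mu\mapsto H(\mu^{*n})$ is continuous on the compact space $\P_{sym}(V)$. On a finite set this continuity can also be checked directly: $\mu^{*n}$ is supported on the finite set $V^n$ and depends polynomially on $\mu$.

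The one subtle point is that a symmetric measure $\mu$ on $V$ need not be non-degenerate on $G$; its support generates some subgroup $\langle \operatorname{supp}\mu\rangle$. Since $\mu$ is symmetric, this is a finitely generated subgroup, and the random walk lives in it. We need that $\mu$ is Liouville there, i.e.\ that $h(\mu)=0$. I would argue this as follows. Choose a symmetric, finitely supported, non-degenerate measure $\nu$ on $G$, and consider $\mu_s=(1-s)\mu+s\nu$. This measure is Liouville by the finitely Liouville hypothesis, so $h(\mu_s)=0$. Entropy is finite here, so we can use $h(\mu)\le h(\mu_s)+$ a correction from the Bernoulli choice. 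Concretely, $H(\mu^{*n})$ is bounded using the coupling argument in the proof of Lemma~\ref{lem:continuity}.

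A cleaner route is to interpret ``finitely Liouville'' as applying to all finitely supported symmetric measures, including degenerate ones; this is how the definition is literally worded. I would adopt that reading, noting that the definition says ``any finitely supported symmetric probability measure.'' Thus $h(\mu)=0$ for every $\mu\in\P_{sym}(V)$, by the Kaimanovich--Vershik/Derriennic theorem, since $H(\mu)\le\log|V|<\infty$.

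Next, for each $\mu\in\P_{sym}(V)$ we use $h(\mu)=\inf_n \frac1n H(\mu^{*n})=0$ to pick $n_\mu$ with $H(\mu^{*n_\mu})<\frac{\ep}{2} n_\mu$. By continuity, there is an open neighbourhood $U_\mu$ on which $H(\nu^{*n_\mu})<\ep n_\mu$. Compactness then gives finitely many $\mu_1,\dots,\mu_r$ whose neighbourhoods cover $\P_{sym}(V)$, with associated times $n_1,\dots,n_r$.

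The main obstacle is obtaining a single common $N$, since a bound at time $n_i$ need not transfer directly to other times. I would set $N$ to be a large multiple of $L=\mathrm{lcm}(n_1,\dots,n_r)$; in fact $N=L$ suffices. For $\nu\in U_{\mu_i}$, write $N=jn_i$ and use subadditivity: $H(\nu^{*N})\le jH(\nu^{*n_i})<j\ep n_i=\ep N$. This yields the uniform $N$, depending only on $V$ and $\ep$.
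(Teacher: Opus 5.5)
Your proof is correct. It shares the paper's skeleton (compactness of $\P_{sym}(V)$ plus continuity of $\mu\mapsto H(\mu^{*n})$), but it extracts the uniform time $N$ differently.

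\textbf{How the paper gets $N$.} The paper covers $\P_{sym}(V)$ by the open sets $B(n\ep)=\{\mu : H(\mu^{*n})<n\ep\}$ and takes a finite subcover $B(\ep),\dots,B(N\ep)$. It then proves these sets are nested, i.e.\ that $\frac1n H(\mu^{*n})$ is non-increasing in $n$. This step relies on the Kaimanovich--Vershik fact that the increments $H(\mu^{*(n+1)})-H(\mu^{*n})$ are non-increasing. As a result, $N$ is simply the largest index in the subcover, and the bound holds for every $n\ge N$.

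\textbf{How you get $N$.} You take $N=\mathrm{lcm}(n_1,\dots,n_r)$ and use only subadditivity, $H(\nu^{*jn})\le jH(\nu^{*n})$. This is more elementary. The price is a larger $N$, and a bound guaranteed only along multiples of $N$. That is all that is needed, since Lemma~\ref{lem:neighbor} only uses $h(\theta)\le \frac1N H(\theta^{*N})$.

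\textbf{Smaller points.} You are right that continuity on $\P_{sym}(V)$ is elementary here, because everything lives on the finite set $V^n$. The $\D$-metric only becomes essential in Lemma~\ref{lem:neighbor}, where $\theta$ ranges over all of $\P(G)$. Your $\ep/2$ margin is harmless but unnecessary, since $\{\nu : H(\nu^{*n})<\ep n\}$ is already open.

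\textbf{Degeneracy.} The paper uses your reading tacitly: every $\mu\in\P_{sym}(V)$, even one whose support generates a proper subgroup, gives a Liouville walk on $\langle \mathrm{supp}\,\mu\rangle$, hence $h(\mu)=0$. It is the only workable reading. The paper lists $S_\infty$ as an example, and $S_\infty$ is not finitely generated, so under the non-degenerate convention the hypothesis would be vacuous.

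Your discarded alternative via $\mu_s=(1-s)\mu+s\nu$ would not have worked, for two reasons:
\begin{itemize}
\item $h(\lambda)=\inf_n \frac1n H(\lambda^{*n})$ is an infimum of continuous functions, hence only upper semicontinuous. So $h(\mu_s)=0$ for $s>0$ gives no information about $h(\mu)$.
\item The coupling bound from Lemma~\ref{lem:continuity} contains the term $n(1-(1-t)^n)H(\mu)$. For fixed $t>0$ this is of order $nH(\mu)$, not $o(n)$.
\end{itemize}
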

\begin{proof}
    Let $\ep>0$ and $n \in \NN$.  Consider the set
    $$
    B(n\ep) = \{ \mu \in \P_{sym}(V)\ :\ H(\mu^{*n})<n\ep \},
    $$ 
    which 
    is an open set with respect to $\D$--metric by Lemma~\ref{lem:continuity}. Since $G$ is finitely Liouville, $(B(n\ep))_{n\geq 1}$ is an open cover for $\P_{sym}(V)$. Since $V$ is finite, $\P_{sym}(V)$ is compact with respect to $\D$. Thus, there exists $N$ such that 
    $$ 
    \P_{sym}(V) =\bigcup_{n=1}^N B(n\ep) \,.
    $$
    We claim that $B(i\ep) \subseteq B(j\ep)$ for every $i\leq j$, and consequently, $P_{sym}(V) = B(N\ep)$.  For the sake of contradiction, suppose that $H(\mu^{*n})< n \ep$ but $H(\mu^{*(n+1)}) \geq (n+1)\ep$. By subadditivity, $H(\mu^{*(n+1)}) \leq (n+1)H(\mu)$, we obtain $H(\mu) \geq \ep$. On the other hand, $(H(\mu^{*n+1})-H(\mu^{*n}))_{n\geq 1}$ is a decreasing sequence, see \cite[Proposition 1.3]{K-Vershik83}, thus 
    $$
    H(\mu^{*2}) - H(\mu) \geq H(\mu^{*(n+1)}) - H(\mu^{*n}) \geq \ep \,.
    $$
    Thus, $H(\mu^{*2}) \geq 2\ep$. Similarly, we can show that $H(\mu^{*i}) \geq i \ep$ for $i=1,\dots, n+1$, which is a contradiction with $H(\mu^{*n}) <n \ep$. Therefore, $B(n\ep) \subseteq B((n+1)\ep)$ for every $n\in\NN$ and $\ep>0$. Thus, $\P_{sym}(V)= B(N\ep)$.
\end{proof}
\begin{remark}\label{re:cont}
    Note that when $V'$, a set of probability measures,  admits a \emph{uniform first moment}, then  the map $V' \ni\mu\to H(\mu^{*n})$ is continuous in $\ell^1(G)$ \cite[Lemma A.1.]{Tanaka}. Under additional conditions on the group, the continuity of $\mu \to h(\mu) $ has been established in \cite{Kaimanovich-Erschler2013} and \cite{silva2025continuity}. However, the advantage of working with $\D$--metric is that no moment condition is needed for continuity $\mu \to H(\mu^{*n})$. 
\end{remark}
Denote by $\D(\theta,P_{sym}(V))=\inf\{\D(\theta,\lambda) \ :\ \lambda \in \P_{sym}(V) \}$, where  $\theta\in \P(G)$ and  $V$ is a symmetric subset $V$ of $G$. 
\begin{lemma}\label{lem:neighbor} Let $G$ be finitely Liouville. Then for every $\ep>0$ and every finite  symmetric subset $V$ of $G$ there exists  $\delta(V,\ep)>0$ such that 
$$
h(\theta)<\ep
$$ 
for every $\theta \in \P(G)$
satisfying $\D(\theta, \P_{sym}(V)) < \delta(V,\ep)$. 
\end{lemma}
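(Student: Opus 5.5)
We combine Lemma~\ref{lem:uniform} with Lemma~\ref{lem:continuity} at a single fixed time $N$, and then use $h(\theta)=\inf_n \frac1n H(\theta^{*n})\le \frac1N H(\theta^{*N})$.

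Fix $\ep>0$ and a finite symmetric $V$. First apply Lemma~\ref{lem:uniform} with $\ep/2$. This gives $N=N(V,\ep)$ such that $H(\lambda^{*N})<\ep N/2$ for every $\lambda\in\P_{sym}(V)$.

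Next we make this bound uniform in a $\D$-neighborhood of $\P_{sym}(V)$. By Lemma~\ref{lem:continuity}, for each $\lambda\in\P_{sym}(V)$ there is $r_\lambda>0$ such that $\D(\theta,\lambda)<r_\lambda$ implies
$$
|H(\theta^{*N})-H(\lambda^{*N})|<\ep N/2 .
$$
Rather than invoking compactness and a Lebesgue number, it is simpler to use the explicit estimate from the proof of Lemma~\ref{lem:continuity}:
$$
|H(\theta^{*N})-H(\lambda^{*N})|\le N\D(\theta,\lambda)+N\bigl(1-(1-t)^N\bigr)H(\lambda)+NH(b_1^t),
$$
where $2t=\|\theta-\lambda\|\le\D(\theta,\lambda)$. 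For $\lambda\in\P_{sym}(V)$ we have $H(\lambda)\le\log|V|$, so the right-hand side is bounded by a function $\omega_N(\D(\theta,\lambda))$, depending only on $N$ and $|V|$, with $\omega_N(s)\to0$ as $s\to0$. The degenerate cases $t=0$ and $t=1$ are harmless, since $t=0$ means $\theta=\lambda$. Now choose $\delta(V,\ep)>0$ with $\omega_N(s)<\ep N/2$ for all $s<\delta$.

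Given $\theta$ with $\D(\theta,\P_{sym}(V))<\delta$, pick $\lambda\in\P_{sym}(V)$ with $\D(\theta,\lambda)<\delta$. Then
$$
H(\theta^{*N})<H(\lambda^{*N})+\ep N/2<\ep N,
$$
so $h(\theta)\le H(\theta^{*N})/N<\ep$. If $H(\theta)$ is infinite, then $\D(\theta,\lambda)=\infty$, so such a $\theta$ is never within distance $\delta$ and the statement is vacuous.

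The main obstacle is the uniformity in $\lambda$. Lemma~\ref{lem:continuity} only gives pointwise continuity, and $\P_{sym}(V)$ sits inside the non-compact space $\P(G)$. The plan is to handle this through the explicit modulus above, using the uniform entropy bound $\log|V|$ on $\P_{sym}(V)$. Failing that, the fallback is a compactness argument: cover $\P_{sym}(V)$ by balls of radius $r_\lambda/2$, extract a finite subcover (possible since $\P_{sym}(V)$ is $\D$-compact), and take $\delta$ to be the minimum of the halved radii.
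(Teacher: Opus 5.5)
Your proof is correct, and your fallback is exactly the paper's argument. The paper fixes $N$ from Lemma~\ref{lem:uniform} with $\ep/2$ and takes, for each $\lambda\in\P_{sym}(V)$, a radius $\delta_\lambda$ from the pointwise continuity in Lemma~\ref{lem:continuity}. It then covers the $\D$-compact set $\P_{sym}(V)$ by finitely many balls of radius $\delta_{\lambda_i}/2$, sets $\delta(V,\ep)=\min_i\delta_{\lambda_i}/2$, and concludes by the triangle inequality and $h(\theta)\le H(\theta^{*N})/N$.

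Your main route replaces this second compactness step with a uniform modulus of continuity. For $s=\D(\theta,\lambda)<1$ you have $t\le s/2<1/2$. Since $H(\lambda)\le\log|V|$, and both $t\mapsto 1-(1-t)^N$ and $t\mapsto H(b_1^t)$ increase on $[0,1/2]$, the final estimate in the proof of Lemma~\ref{lem:continuity} is at most $Ns+N\bigl(1-(1-s/2)^N\bigr)\log|V|+NH(b_1^{s/2})$, uniformly in $\lambda$. This gives an explicit $\delta(V,\ep)$ depending only on $N$, $|V|$ and $\ep$; compactness is then used only inside Lemma~\ref{lem:uniform}. The paper's route, by contrast, needs only qualitative continuity, so it does not depend on the precise error terms of that estimate.

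That robustness is not purely academic, because the intermediate inequality (\ref{eq:thetan}) in the proof of Lemma~\ref{lem:continuity} fails in general. Take $n=2$, $\lambda=(1-t)\delta_e+t\delta_x$ and $\theta=(1-t)\delta_e+t\,\U(A)$, with $A$ a large set of free generators and $x\notin A\cup\{e\}$. Then the left side of (\ref{eq:thetan}) exceeds its right side by $2t(1-t)^2\log|A|-O(1)$. The final bound you quote does survive. For instance, bound $H(\theta^{*n})$ by $H(\theta^{*n}\mid b_1^t,\dots,b_n^t)$ plus a mutual-information term, which is at most $n\bigl(H(\theta)-tH(\theta_0)-(1-t)H(w)\bigr)$. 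So your argument stands, but it rests on that final bound being true, not on the printed derivation of it.
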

\begin{proof}
    Suppose that $N$ is as in Lemma~\ref{lem:uniform} for  $\ep/2>0$ and the finite symmetric set $V$. Let  $\lambda \in \P_{sym}(V)$, thus by Lemma~\ref{lem:uniform} 
    \begin{equation}\label{eq:uniN}
    H(\lambda^{*N})<  \frac{N\ep}2 \,.
    \end{equation}
    On the other hand, by Lemma~\ref{lem:continuity}  for $N\ep/2$ there exists $\delta_\lambda>0$ such that 
    \begin{equation}\label{eq:lemuni}
    \forall \theta \in\P(G) \mbox{ \hspace{.3cm}  if  \hspace{.3cm} }\D(\theta,\lambda)< \delta_\lambda \mbox{\hspace{.5cm} then \hspace{.3cm}}
    H(\theta^{*N}) < H(\lambda^{*N}) + \frac{N\ep}2  \,.
    \end{equation}
   For $\lambda \in \P_{sym}(V)$ consider the open set
    $$
    B(\lambda)=\Big\{ \theta \in \P(G) : \D(\theta,\lambda)< \frac{\delta_\lambda}2 \Big\}\,.
    $$
    Because $\P_{sym}(V)$ is compact, for some $j\geq 1$
    $$
    \P_{sym}(V) \subseteq \bigcup_{i=1}^j B(\lambda_i)\,.
    $$
 Define $\delta(V,\ep) = \min\{\delta_i/2 : i=1,\dots,j\}.$  Suppose that $\theta\in \P(G)$ such that $\D(\theta,\lambda) < \delta(V,\ep)$ for some $\lambda \in P_{sym}(V)$, then $\lambda \in B(\lambda_i)$ for some $1 \leq i \leq j$ and by a triangle inequality $\D(\theta,\lambda_i) <\delta_{\lambda_i}$. Therefore, applying (\ref{eq:uniN}), (\ref{eq:lemuni}) for $\lambda_i$, and $h(\theta) = \inf_n \frac1nH(\theta^{*n})$ yields 
 $$
h(\theta) \leq \frac{ H(\theta^{*N})}{N} <   \frac{H(\lambda^{*N})}{N} +\frac{\ep}2<\frac{\ep}{2} + \frac{\ep}2 = \ep\,.
 $$
Thus, we obtained the desired result. 
\end{proof}
For a sub-probability measure $\theta$, define 
$H(\theta) = - \sum_{g\in G} \theta(g) \log \theta(g)$. 

\begin{lemma}\label{lem:mui construction}
   Suppose that $G$ is finitely Liouville. Let $\mu$ be a fully supported, symmetric probability measure with finite entropy on $G$. Let $C_0>0$ be a constant. Then, there exist $(V_n)_{n\geq 1}, $ a disjoint sequence of finite symmetric subsets of $G$ such that $G= \bigcup_{n\geq 1} V_n$ and decreasing sequence $(r_n)_{n\geq 1}$ such that $0<r_n\leq 1$ so that for every $n\geq 1$ the sub-probability measure $\tm_n = \mu|_{V_n}$, the restriction of $\mu$ to $V_n$, satisfies  the following properties: 
   \begin{enumerate}
       \item $\|\tm_1\| \geq \frac12$,
       \item $C_0\displaystyle\sum_{i= n+2}^\infty \|\tm_i\| < \delta_n$,
       \item $C_0\displaystyle\sum_{i=n+2}^\infty H(\tm_i)  < \delta_n$,
       \item $C_0r_{n+1} \|\tm_{n+1}\|  < \delta_n$,
       \item $C_0 r_{n+1} H(\tm_{n+1})  < \delta_n$,
   \end{enumerate}
   where $\delta_n = \delta(V=V_1\cup\dots\cup V_n, \ep=\dfrac1n)$ is decreasing as in Lemma~\ref{lem:neighbor}.
\end{lemma}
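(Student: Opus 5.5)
The plan is an inductive construction. We build $V_1, V_2, \dots$ together with $r_2, r_3, \dots$, and at each stage we only use tail estimates for the fixed measure $\mu$. Since $\mu$ has finite entropy, the entropy tail $\sum_{g\notin W}-\mu(g)\log\mu(g)$ goes to $0$ as the finite set $W$ increases to $G$. The mass tail $\mu(G\setminus W)$ goes to $0$ as well. Fix an enumeration $G=\{a_i,a_i^{-1}\}$. For $V_1$ we take a finite symmetric set containing $e$ and $a_1^{\pm1}$ with $\mu(V_1)\geq \frac12$, which gives (1). We set $r_1=1$. Then $\delta_1=\delta(V_1,1)$ is determined by Lemma~\ref{lem:neighbor}.

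Suppose $V_1,\dots,V_n$ and $r_1\ge\dots\ge r_n$ are already chosen. Then $\delta_1,\dots,\delta_n$ are determined, since $\delta_j$ depends only on $V_1\cup\dots\cup V_j$ and $1/j$. We may also arrange $\delta_j$ to be decreasing by replacing $\delta_j$ with $\min(\delta_1,\dots,\delta_j)$; this keeps the conclusion of Lemma~\ref{lem:neighbor} valid. The construction of step $n+1$ has to secure two things: condition (2) and (3) at index $n$, which restrict $V_{n+2},V_{n+3},\dots$, and conditions (4) and (5) at index $n$, which restrict $r_{n+1}$ once $V_{n+1}$ is fixed.

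\textbf{Choice of $V_{n+1}$.} Take $V_{n+1}$ to be a finite symmetric set, disjoint from $V_1\cup\dots\cup V_n$, containing the first enumerated element $a_i^{\pm1}$ not yet covered. Enlarge it so that
\[
C_0\,\mu\big(G\setminus (V_1\cup\dots\cup V_{n+1})\big)<\delta_n,\qquad C_0\sum_{g\notin V_1\cup\dots\cup V_{n+1}}-\mu(g)\log\mu(g)<\delta_n .
\]
Both tails are monotone in the set, and their limits are $0$, so this enlargement is possible. Since $\tilde\mu_i$ is a restriction of $\mu$, we have $\sum_{i\ge n+2}\|\tilde\mu_i\|$ equal to the mass of the complement. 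Likewise $\sum_{i\ge n+2}H(\tilde\mu_i)$ equals the entropy tail over the complement, because entropy of sub-probability restrictions is additive over disjoint pieces. Hence (2) and (3) at index $n$ hold no matter how the later $V_i$ partition the complement. Covering the enumeration ensures $G=\bigcup V_n$, and disjointness holds by construction.

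\textbf{Choice of $r_{n+1}$.} Once $V_{n+1}$ is fixed, $\|\tilde\mu_{n+1}\|\le1$ and $H(\tilde\mu_{n+1})<\infty$, since this is a finite sum. We choose
\[
0<r_{n+1}\le r_n \quad\text{small enough that}\quad C_0r_{n+1}\big(\|\tilde\mu_{n+1}\|+H(\tilde\mu_{n+1})\big)<\delta_n .
\]
This gives (4) and (5) and keeps $(r_n)$ decreasing.

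The only subtle point is ordering. Each $\delta_n$ must be known before the sets it constrains are chosen. This holds because $\delta_n$ depends only on $V_1,\dots,V_n$, while the constraints involve only $V_{n+1}$ (through the tail) and $r_{n+1}$. With that ordering respected, the induction closes.
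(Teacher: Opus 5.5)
Your proposal is correct and follows essentially the same route as the paper. You choose each new finite symmetric block inductively so that the mass and entropy tails beyond $V_1\cup\dots\cup V_{n+1}$ are below $\delta_n/C_0$; this is legitimate because $\delta_n$ depends only on $V_1,\dots,V_n$. You then shrink $r_{n+1}$ to get (4) and (5). The only cosmetic difference is in how the blocks are produced. The paper first writes $\mu=\sum_i\tilde\theta_i$ as disjoint finitely supported symmetric pieces and groups them into consecutive blocks $m_n<i\le m_{n+1}$. You instead exhaust $G$ directly through an enumeration.
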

\begin{proof}
    Let $\mu= \sum_{i=1}^\infty \tti_i$ where each $\tti_i$ is a finitely supported and symmetric sub-probability measure. It is clear that we can choose $\tti_i$'s with disjoint supports. We choose $m_1\geq 1$ large enough that $\sum_{i=1}^{m_1}  \|\tti_i\| \geq \dfrac12.$   Let $\tm_1=\sum_{i=1}^{m_1}\tti_i$ and $V_1$ be the support of $\tm_1$.  Suppose that by induction, we constructed  $V_1,\dots, V_n$, $m_1<m_2<\dots m_n$, and $\tm_1,\dots, \tm_n$ such that 
    $C_0\sum_{i>m_{(j+1)}}\|\tti_i\| <\delta_j$ and $C_0 \sum_{i>m_{(j+1)}} H(\tti_i) < \delta_j$ hold, and 
    $$
   \tm_{j+1} = \sum_{i>m_j}^{m_{j+1}} \tti_i  \hspace{1cm} j=1,\dots,n-1 \,.
    $$
     We now construct $\tm_{n+1}$. Let  $\delta_n=\delta(V_1\cup \dots \cup V_n, 1/n)$ be as in  Lemma~\ref{lem:neighbor} for $V=V_1\cup\dots\cup V_n$ and $\ep = 1/n$ such that $\delta_n<\delta_{n-1}$. Now, we choose   $m_{n+1}$ large enough such that $C_0\sum_{i>m_{(n+1)}} \| \tti_i\| <\delta_n$ and $C_0 \sum_{i>m_{(n+1)}} H(\tti_i) <\delta_n$. Let   
    $$
    \tm_{n+1} = \sum_{i>m_n}^{m_{n+1}} \tti_i \,, \hspace{1cm} V_{n+1} = \mbox{ support of } \tm_{n+1} \,.
    $$
    It is straightforward to see $(\mu_n)_{n\geq 1}$ satisfies properties (1)-(3). 
    Finally, we can choose $(r_n)_{n\geq 1}$ small enough that properties (4) and (5) are satisfied.
\end{proof}

\begin{lemma}\label{lem:zeronu}
    Let $G$ be finitely Liouville and $\mu$ a symmetric probability measure with finite entropy on $G$. Suppose that sequences $(r_i)_{i\geq1}$ and $(\tm_i)_{i\geq1}$ are constructed as in Lemma~\ref{lem:mui construction} for $C_0=30H(\mu) +30$. Let  $\nu=\sum_{i\geq 1} c_i \tm_i$ be a probability measure on $G$ such that $c_1\geq 1$ and $0 \leq c_i \leq 2$ for every $i\geq 1$. If $c_{n+1} \leq 2r_{n+1}$ for some $n\geq 1$, then 
    $$
    h(\nu) < \frac1n\,.
    $$ 
\end{lemma}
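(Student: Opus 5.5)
The idea is to show that $\nu$ lies within $\D$-distance $\delta_n$ of $\P_{sym}(V)$, where $V=V_1\cup\dots\cup V_n$, and then apply Lemma~\ref{lem:neighbor} with $\ep=1/n$ to get $h(\nu)<1/n$.

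\textbf{The comparison measure.} Split $\nu=\nu_{\le n}+c_{n+1}\tm_{n+1}+\nu_{>n+1}$, where $\nu_{\le n}=\sum_{i\le n}c_i\tm_i$ is a symmetric sub-probability measure supported on $V$. Set $s=\|c_{n+1}\tm_{n+1}+\nu_{>n+1}\|$, which is the mass we must move onto $V$. Take
$$\lambda=\nu_{\le n}+s\,\U(\{e\}).$$
If $e\in V_1$ this is a symmetric probability measure on $V$. Since $\|\tm_1\|\ge\frac12$ we may arrange $e\in V_1$ in the construction; if that is not automatic, one can instead rescale $\nu_{\le n}$ by $1/(1-s)$.

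\textbf{The $\ell^1$ distance.} We have $\|\nu-\lambda\|\le 2s$, and
$$s\le c_{n+1}\|\tm_{n+1}\|+\sum_{i\ge n+2}c_i\|\tm_i\|\le 2r_{n+1}\|\tm_{n+1}\|+2\sum_{i\ge n+2}\|\tm_i\|.$$
By properties (2) and (4) of Lemma~\ref{lem:mui construction}, this is at most $4\delta_n/C_0$.

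\textbf{The entropy difference.} Entropy of a sum of disjointly supported pieces is additive. For a scalar multiple, $H(c\theta)=cH(\theta)-c\|\theta\|\log c$, so with $c\le 2$ we get $H(c\theta)\le 2H(\theta)+2\|\theta\|$, up to a harmless constant. Hence
$$|H(\nu)-H(\lambda)|\le H(c_{n+1}\tm_{n+1})+\sum_{i\ge n+2}H(c_i\tm_i)+|H(s\,\U(\{e\}))-H(\text{mass at } e)|.$$
The point mass at $e$ contributes at most about $s+s\log(1/s)$, plus a continuity term because $e$ already carries mass in $\nu_{\le n}$. Both terms are controlled by $s$ and the entropies through properties (3) and (5), using $c_{n+1}\le 2r_{n+1}$.

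\textbf{Main obstacle.} The difficulty is the $s\log(1/s)$ term: it is not linear in $s$, so properties (2)–(5) do not bound it directly. The choice $C_0=30H(\mu)+30$ suggests it is absorbed into the linear bounds. The plan is to avoid adding a new atom at all: since $e\in V_1$ already has mass $\nu(e)\ge$ some fixed amount, take the difference $(\nu(e)+s)\log(\nu(e)+s)-\nu(e)\log\nu(e)$, which is $O(s(1+|\log\nu(e)|))$. That is linear in $s$, with a constant that can be absorbed into $C_0$. If that fails, rescale $\nu_{\le n}$ by $1/(1-s)$ instead, which changes the entropy by $O(s\,H(\mu)+s)$, again linear, and explains why $H(\mu)$ appears in $C_0$.

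With either choice, $\D(\nu,\lambda)\le 30(H(\mu)+1)\cdot(\text{terms})/C_0<\delta_n$. Lemma~\ref{lem:neighbor} then gives $h(\nu)<1/n$.
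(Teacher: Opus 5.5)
Your fallback is exactly the paper's proof: rescale $\tilde\lambda=\sum_{i\le n}c_i\tm_i$ to $\lambda=\tilde\lambda/\|\tilde\lambda\|\in\P_{sym}(V)$, check $\D(\nu,\lambda)<\delta_n$, and invoke Lemma~\ref{lem:neighbor}. That is the route you should commit to. Your primary route, putting the missing mass $s$ on $e$, has a genuine gap, so your closing claim that ``either choice'' works is wrong.

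\begin{itemize}
\item Nothing in Lemma~\ref{lem:mui construction} puts $e$ in $V_1$, or even in $V_1\cup\dots\cup V_n$ when $n$ is small. The statement is about any sequence produced by that construction.
\item Even if $e\in V_1$, your entropy estimate at the atom $e$ is $s\,(1+|\log\nu(e)|)\le s\,(1+|\log\mu(e)|)$. The quantity $|\log\mu(e)|$ is not bounded by any multiple of $H(\mu)+1$: $\mu(e)$ can be tiny while $H(\mu)$ stays bounded.
\end{itemize}
The lemma fixes $C_0=30H(\mu)+30$, so this constant cannot be absorbed, and the route does not prove the statement as given.

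On the rescaling route, make the uses of the hypotheses explicit.
\begin{itemize}
\item With $s=1-\|\tilde\lambda\|$ you have $1-s\ge c_1\|\tm_1\|\ge\frac12$. This is exactly why $c_1\ge1$ and $\|\tm_1\|\ge\frac12$ are assumed.
\item Since $H(\lambda)=H(\tilde\lambda)/(1-s)+\log(1-s)$, you get $|H(\lambda)-H(\tilde\lambda)|\le\frac{s}{1-s}\bigl(H(\tilde\lambda)+1\bigr)\le 2s\bigl(H(\nu)+1\bigr)$.
\item $H(\nu)=\sum_i\bigl(c_iH(\tm_i)-c_i\log c_i\,\|\tm_i\|\bigr)\le 2H(\mu)+1$, because $-c\log c\le 1/e$ on $[0,2]$.
\end{itemize}
Together with $\|\nu-\lambda\|=2s$ and $s<4\delta_n/C_0$, this is where the factor $H(\mu)+1$ in $C_0$ comes from.

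One further point. The nonlinearity you worried about for $s$ actually occurs in the term $H(c_{n+1}\tm_{n+1})=c_{n+1}H(\tm_{n+1})+c_{n+1}\log(1/c_{n+1})\,\|\tm_{n+1}\|$.
\begin{itemize}
\item The first piece is handled by property (5) via $c_{n+1}\le 2r_{n+1}$.
\item The second piece can be of order $r_{n+1}\log(1/r_{n+1})\,\|\tm_{n+1}\|$, which property (4) alone does not control.
\end{itemize}
The paper's displayed bound $H(c_{n+1}\tm_{n+1})\le 2r_{n+1}H(\tm_{n+1})+2r_{n+1}\|\tm_{n+1}\|$ passes over this too. It is harmless: in Lemma~\ref{lem:mui construction}, $r_{n+1}$ is chosen after $\tm_{n+1}$ and $\delta_n$, so you can additionally require $C_0\,r_{n+1}\log(1/r_{n+1})\,\|\tm_{n+1}\|<\delta_n$.
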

\begin{proof}
Let  $\tilde{\lambda} = \sum_{i=1}^{n} c_i \tm_i$ and  $\lambda=\tilde\lambda/\|\lambda\|$. Let $V$ be the support of $\tm_1+\dots+\tm_n$. It is enough to show that $\D(\nu,\lambda)<\delta_n=\delta(V,1/n)$, then the results follows immediately by Lemma~\ref{lem:neighbor}. 
    Note that 
    $$
    H(\nu) = \sum_{i=1}^\infty H(c_i\tm_i) = \sum_{i=1}^\infty c_i H(\tm_i)  - \sum_{i=1}^\infty \|\tm_i\| c_i \log c_i \,.
    $$
    Since  $H(\mu)$ is finite and for $0 \leq c_i \leq 2$ for $i\geq 1$, 
    \begin{equation}\label{eq:finitenu}
    H(\nu) \leq 2 \sum_{i=1}^\infty H(\tm_i) + 2\log 2 < 2H(\mu) + 1\,.
    \end{equation}
    On the other hand, 
   $$
 H(\lambda) = \frac{1}{\|\tilde\lambda\|} \sum_{i=1}^{n} H(c_i \tm_i) + \log\|\tilde\lambda\|\,.
    $$
    Note that $\frac12 \leq \|\tilde \lambda\| \leq 1$, thus
\setlength{\jot}{5pt} 
\begin{align}
|H(\nu) - H(\lambda)| \leq  & \Big( \frac{1}{\|\tilde\lambda\|}-1\Big)  \sum_{i=1}^{n} H(c_i \tm_i) -\log\|\tilde\lambda\| +\sum_{i=n+1}^{\infty} H(c_i \tm_i)  \\ 
 \label{eq:differenceentropy} \leq &  2 H(\nu) \Big(1-\|\tilde\lambda\|\Big) + H(c_{n+1}\tm_{n+1})+\sum_{i=n+2}^\infty H(c_{i}\tm_{i}) \,.
\end{align}
The inequality in (\ref{eq:differenceentropy}) stems from $x^{-1}-1-\log x < 2(1-x)$ when $1/2 \leq x \leq 1$. This is where we used $c_1\geq 1$ and $\|\tm_1\|\geq 1/2$ to ensure $\|\tilde\lambda\|\geq 1/2$. 
Using $c_{n+1} \leq 2r_{n+1}$ and properties of $(\tm_i)_{i\geq 1}$ and $r_{n+1}$ as in Lemma~\ref{lem:mui construction}, we can bound $|H(\nu)-H(\lambda)|$ further:
\setlength{\jot}{10pt} 
\begin{align}\label{eq:differenceentropy2}
1- \|\tilde\lambda\| \leq & \sum_{i=n+1}^\infty c_i\|\tm_i\| < \frac{4\delta_n}{C_0}   \\
H(c_{n+1}\tm_{n+1}) \leq & 2r_{n+1}H(\tm_{n+1}) + 2 r_{n+1} \|\tm_{n+1}\| < \frac{4\delta_n}{C_0} \\
\sum_{i=n+2}^\infty H(c_i\tm_i) \leq &\sum_{i=n+2}^\infty 2H(\tm_i) +\sum_{i=n+2}^\infty \|\tm_i\| \ c_i  |\log c_i| < \frac{3\delta_n}{C_0} 
\end{align}
Putting the above inequalities implies 
$$
|H(\nu)-H(\lambda)| <\frac{8}{5C_0}(2H(\mu)+1)\delta_n + \frac{7}{C_0}\delta_n\,.
$$
On the other hand, 
$$
\|\nu-\lambda\| = 2(1-\|\lambda\|) \leq \frac{8}{C_0}\delta_n\,.
$$ 
Note that $C_0= 30H(\mu)+30$. Therefore, $\D(\nu,\lambda)=\|\nu-\lambda\|+|H(\nu)-H(\lambda)|<\delta_n$.
\end{proof}
We  are now prepared to prove Theorem~\ref{thm:B}. 
 \begin{theorem}\label{thm:entropy}
Let $G$ be  finitely Liouville  non-hyper-FC-central group, and $k\geq 2$. Then there exists a sequence of non-degenerate symmetric probability measures $(\nu_n)_{n\geq 1}$ 
 on $G$ with finite entropy  such that  any $k-1$ convex combination of them is Liouville and any non-trivial $k$ or more (finite or infinite) convex combination of them is non-Liouville. 
 \end{theorem}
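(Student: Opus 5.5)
We first reduce to the ICC case, exactly as in the proof of Theorem~\ref{thm:main1}. By \cite[Proposition~5.10]{Erschler-Kaimanovich}, Liouvilleness is detected on the quotient $G/\mathrm{HFC}(G)$, which is ICC. We also need the quotient to remain finitely Liouville, since symmetric finitely supported measures lift to symmetric finitely supported measures. This lifting is done via finitely supported symmetric sections, and we will note this point explicitly.

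Next we build a fully supported symmetric base measure with finite entropy. Fix $p,q$ fully supported on $\NN$ with finite entropy and satisfying (\ref{eq:pq}), together with the construction of Definition~\ref{df:mu2}. Here $G_n=\{a_n,a_n^{-1}\}$ and $S_n$ is switching, so all pieces have size at most two. Set $\mu=\mu^{1/2,p,q}$, which has finite entropy. Apply Lemma~\ref{lem:mui construction} to $\mu$ with $C_0=30H(\mu)+30$. This gives blocks $V_n$, pieces $\tm_n=\mu|_{V_n}$, and numbers $r_n$. We may choose the blocks compatibly with the index structure, so that each $V_n$ is a union of whole $G_j$'s and $S_j$'s, with index ranges $I_n$ that increase in $n$.

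Now take a $k$--cover $(A_j)_{j\ge1}$ of $\NN$ from Lemma~\ref{lem:kcovering}, indexing the partition by the blocks $n\ge2$. Define $c^{(j)}_1=1$, $c^{(j)}_n=1$ for $n\in A_j$, and $c^{(j)}_n=r_n$ for $n\notin A_j$. Let $\nu_j=\sum_n c^{(j)}_n\tm_n/Z_j$, where the normaliser $Z_j\in[1/2,1]$ keeps all coefficients in $[0,2]$.

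Each $\nu_j$ is symmetric and fully supported. It has finite entropy because its coefficients are bounded and $H(\mu)<\infty$.

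\textbf{Liouville part.} Take a convex combination $\nu=\sum_{m=1}^{k-1}b_m\nu_{j_m}$ of at most $k-1$ of the $\nu_j$. Then $\nu=\sum_n c_n\tm_n$ with $c_1\ge1$ (after normalisation) and $0\le c_n\le2$. For the infinitely many $n$ outside $A_{j_1}\cup\dots\cup A_{j_{k-1}}$, which exist by property (2) of a $k$--cover, we have $c_{n}\le 2r_{n}$. Lemma~\ref{lem:zeronu} then gives $h(\nu)<1/(n-1)$ along infinitely many $n$, so $h(\nu)=0$. By Kaimanovich--Vershik/Derriennic, $\nu$ is Liouville.

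\textbf{Non-Liouville part.} Take a combination $\nu$ with at least $k$ atoms. As in the proof of Theorem~\ref{thm:main1}, every block coefficient satisfies $c_n\ge 1-\bar b>0$, and $c_n\le 2$. Hence $d\mu\le\nu\le d'\mu$. Rewriting this in terms of the indices, $\nu=\mu^{1/2,\tilde p,\tilde q}$ up to a change of $t$, with $\tilde p+\tilde q$ comparable to $p+q$. Lemma~\ref{lem:nont} then gives that $\nu$ is non-Liouville. Infinite combinations are handled the same way.

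\textbf{Main obstacle.} The hard part is the compatibility between two requirements. Lemma~\ref{lem:nont} needs the $t$-splitting into $S$-mass and $G$-mass to be preserved: multiplying blocks by factors $c_n$ changes $\tilde p$ and $\tilde q$ by the same bounded factor on each index, so the sum comparability is fine. At the same time, the first condition of (\ref{eq:pq}) must survive even though the factor $r_n$ is allowed to be tiny on excluded blocks, since a combination with $k$ atoms never sees tiny factors.

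We also have to check that $c_1\ge1$ holds in Lemma~\ref{lem:zeronu}, possibly after replacing the constant $2$ by a larger one adjusted in $C_0$. Finally, the decreasing choice of $r_n$ must be compatible with the blocks being built inductively; this is handled by Lemma~\ref{lem:mui construction}.
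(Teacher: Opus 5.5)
Your proposal is correct and follows the paper's proof essentially step by step. It uses the same finite-entropy base measure $\mu^{t,p,q}$, the same blocks $\tm_n$ and factors $r_n$ from Lemma~\ref{lem:mui construction}, the same $\nu_j$ built from a $k$--cover with the first block always kept, Lemma~\ref{lem:zeronu} along the infinitely many uncovered blocks for combinations of $k-1$, and Lemma~\ref{lem:nont} (with $\tilde p\asymp p$ and $\tilde q\asymp q$ individually) for combinations of $k$ or more. The differences are cosmetic: your bound $c_n\ge 1-\bar b$ replaces the paper's $\tilde b/2$; your remark that finite Liouvilleness passes to $G/\mathrm{HFC}(G)$ makes explicit a point the paper leaves implicit; and the index-compatible choice of blocks is unnecessary, since symmetry of each $V_n$ already makes $\nu$ uniform on every $S_j$ and $G_j$.
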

\begin{proof}
Similar to Theorem~\ref{thm:main1}, without loss of generality, we assume $G$ is a countable amenable ICC group. Let $p$ and $q$ be two fully supported probability measures on $\NN$ with finite entropy. Suppose that $p$ and $q$ satisfy property (\ref{eq:pq}) in Lemma~\ref{lem:nont}. Let $0<t<1$ and $\mu=\mu^{t,p,q}$ as in Definition~\ref{df:mu2}. Thus, $\mu$ is a fully supported, symmetric probability measure on $G$  with finite entropy. Suppose that  $(r_i)_{i\geq 1}$ and $(\tm_i)_{i\geq1}$ are as in Lemma~\ref{lem:mui construction} for probability measure $\mu$. 
Let $(A_n)_{n\geq1}$ be a $k$--cover (see Lemma~\ref{lem:kcovering}), without loss of generality (after including 1 to every $A_n$) we can assume $1\in A_n$ for $n\geq 1$. Define 
$$
\tn_n = \sum_{i \in A_n} \tm_i + \sum_{i \not\in A_n} r_i \tm_i\,, \hspace{1cm} \nu_n = \frac{\tn_n}{\|\tn_n\|} \,.
$$
Note that $\nu_n$ is fully supported and symmetric. Suppose that $b$ is a probability measure on $\NN$. Thus  every $\tm_i$ appears with some positive coefficients in 
\begin{equation}\label{eq:cimu}
 \nu = \sum_{n=1}^\infty b_n \nu_n = \sum_{i=1}^\infty c_i \tm_i \,.   
\end{equation}
Note that $1 \in A_n$ and $\|\tm_1\|\geq \dfrac12$, thus $\|\tn_n\| \geq \|\tm_1\|\geq \dfrac12$. On the other hand $r_i\leq 1$,  thus 
\begin{equation}\label{eq:ttn_bound}
   \frac12 \leq \|\tn_n\| \leq 2\,. 
\end{equation}
\subsection*{$k-1$ convex combinations is Liouville:}
Without loss of generality, assume $b_n=0$ for $n\geq k$. It is clear that $0< c_i \leq 2$ for every $i\geq 1$. If $i\not\in \bigcup_{n=1}^{k-1} A_n$, then by (\ref{eq:ttn_bound})
$$
c_i= \sum_{n=1}^{k-1}  \frac{b_nr_i}{\|\tn_n\|} \leq 2r_i \,.
$$ 
Therefore, by Lemma~\ref{lem:zeronu} $h(\nu) <\dfrac{1}{i-1}$.  Since $(A_n)_{n\geq1}$ is a $k$--cover, $\NN\backslash (\bigcup_{n=1}^{k-1} A_n)$  is an infinite set. Thus, $h(\nu)=0$, and $\nu$ is Liouville.
\subsection*{At least $k$ convex combination is non-Liouville:}
Now suppose that at least $k$ of $\{b_n\ :\ n\geq 1\}$ are non-zero. Without loss of generality, suppose that $b_1b_2\cdots b_k>0$. Let $\tilde b=\min\{b_1,b_2,\dots,b_k\}$. Since $\NN=\bigcup_{n=1}^k A_n$, for every $i\in \NN$ there exists some $n \in \{1,\dots,k\}$ such that 
$$
c_i \geq \frac{b_n}{\|\tn_n\|} \geq \frac{\tilde b}{2}>0 \,.
$$
On the other hand, by construction of $(\tm_i)_{i\geq1}$ in Lemma~\ref{lem:mui construction}, 
$$
\mu^{t,p,q}= \sum_{i=1}^\infty \tm_i \,.
$$
Thus, $\nu = \mu^{\bar{t},\bar{p},\bar{q}}$ for some $0<\bar{t}<1$, and some fully supported probability measures $\bar{p}$ and $\bar{q}$ on $\NN$. Since $\dfrac{\tilde{b}}{2} \leq c_i \leq 2$, we obtain $\dfrac{\tilde{b}}{2} \bar{p} \leq p \leq 2 \bar{p}$  and $\dfrac{\tilde{b}}{2} \bar{q} \leq q \leq 2 \bar{q}.$ Thus, Lemma~\ref{lem:nont} implies that $\nu$ is non-Liouville. 
\end{proof}
 \bibliographystyle{amsalpha}
\bibliography{ref}
\end{document}